\numberwithin{equation}{section}
\newtheorem{thm}{Theorem}[section]
\newtheorem{cor}[thm]{Corollary}
\newtheorem{lem}[thm]{Lemma}
\newtheorem{prop}[thm]{Proposition}
\newtheorem{rem}[thm]{Remark}
\def\R{\mathfrak R}
\def\F{\mathfrak F}
\def\H{\mathfrak H}
\def\R{\mathfrak R}
\def\bmu{\boldsymbol{\mu_o}}
\def\rit#1{{\mbox{\rm #1}}}
\def\modx#1#2{\equiv#1\hspace{-1mm}\mod #2}
\def\nmodx#1#2{\not\equiv#1\hspace{-1mm}\mod #2}
\def\itemx#1{\item[{\rm(#1)}]}
\def\br#1{\{#1\}}
\begin{document}
\title{Minimal equations and values of generalized lambda functions}
\author{Noburo Ishii}
%\date{10 November 2015}
\curraddr{8-155 Shinomiya-Koganezuka Yamashina-ku Kyoto\\
607-8022 Japan}
\email{Noburo.Ishii@ma2.seikyou.ne.jp} 
\begin{abstract}
 In our preceding article, we defined a generalized lambda function $\Lambda(\tau)$ and showed that $\Lambda(\tau)$ and the modular invariant function $j(\tau)$ generate the modular function field with respect to a principal congruence subgroup. In this article we shall study a minimal equation and values of $\Lambda(\tau)$. 
\end{abstract}
\maketitle
\section{Introduction}
In this article, we fix a positive integer $N$ and unless otherwise mentioned use the notation $\zeta=\exp(2\pi i/N)$ and $q=\rit{exp}(2\pi i\tau/N)$, where $\tau$ is a variable on the complex upper half plane $\H$. Let $\Gamma(N)$ be the principal congruence subgroup of level $N$, thus,
\[
\Gamma(N)=\left\{\left. \begin{pmatrix} a & b \\ c & d \end{pmatrix}\in \rit{SL}_2(\mathbb Z)~\right |~ a-1\equiv b\equiv c \equiv 0 \mod N \right\}. 
\]
In particular, $\Gamma(1)=\rit{SL}_2(\mathbb Z)$.  We denote by $A(N)$ the modular function field consisted of all modular functions  with respect to $\Gamma(N)$ having $q$-expansions with coefficients in $K_N=\mathbb Q(\zeta)$. For $\tau\in \H$, let $\wp(z;L_\tau)$ be the Weierstrass $\wp$-function relative to the lattice $L_\tau$ generated by $1$ and $\tau$.  Let $N>1$. For an element of $(r,s)$ of the group $\mathbb Z/N\mathbb Z\oplus\mathbb Z/N\mathbb Z$, we denote by $\varphi_\tau((r,s))$ the element of the group $\mathbb C/L_\tau$ congruent to $(r\tau+s)/N \mod  L_\tau$. In \cite{IN2}, for a basis $\{Q_1,Q_2\}$ of $\mathbb Z/N\mathbb Z\oplus\mathbb Z/N\mathbb Z$, we defined a generalized lambda function by
\begin{equation*}\label{genlam1}
\Lambda(\tau;Q_1,Q_2)=\frac{\wp(\varphi_\tau(Q_1);L_\tau)-\wp(\varphi_\tau(Q_1+Q_2);L_\tau)}{\wp(\varphi_\tau(Q_2);L_\tau)-\wp(\varphi_\tau(Q_1+Q_2);L_\tau)}
\end{equation*}
and showed  that $\Lambda(\tau;Q_1,Q_2)$ and the modular invariant function $j(\tau)$ generate $A(N)$ over $K_N$ if $N\ne 1,6$. For $N=2$, the function $\lambda=\Lambda(\tau;(0,1),(1,0))$ is known classically as the elliptic modular lambda function. It is proved in \cite{LS} 18.6  that  
\begin{equation*}
j=2^8\frac{(\lambda^2-\lambda+1)^3}{\lambda^2(\lambda-1)^2},
\end{equation*}
which gives a minimal equation of $\lambda$ over $\mathbb Q(j)$;
\begin{equation*}
x^6-3x^5+(6-j/256)x^4+(-7+j/128)x^3+(6-j/256)x^2-3x+1=0. 
\end{equation*}
This equation has a symmetric property concerning the coefficients, which means that the coefficient of $x^i$ is equal to that of $x^{6-i}$ for $i=0,\cdots,3$.\\
The purpose of this article is to study a minimal equation over $K_N(j)$ and values at imaginary quadratic integers for $\Lambda(\tau;Q_1,Q_2)$. In the course we obtain a similar symmetric property for coefficients of the minimal equation. In the last section, in the cases that $N=3,4$, we give equations among $\Lambda(\tau)=\Lambda(\tau;(1,0),(0,1)),j,\lambda$ and the eta quotient $\eta(\tau)/\eta(N\tau)$, and compute the values of $\Lambda(\tau)$ at imaginary quadratic integers $\alpha$ such that $\mathbb Z[\alpha]$ are maximal orders of class number one.\\
\indent The following notation will be used throughout this article.\newline 
The field $\mathbb Q(\zeta)$ and its maximal order $\mathbb Z[\zeta]$ are denoted by $K_N$ and $O_N$ respectively. For a function $f(\tau)$ and $A=\left(\begin{smallmatrix}a&b\\c&d\end{smallmatrix}\right)\in\Gamma(1)$, $f[A]_2$ and $f\circ A$ represent
\[
f[A]_2=f\left(\frac{a\tau+b}{c\tau+d}\right)(c\tau+d)^{-2}\text{ and }f\circ A=f\left(\frac{a\tau+b}{c\tau+d}\right).
\]
 The greatest common divisor of $a,b\in\mathbb Z$ is denoted by $GCD(a,b)$. For an integer $\ell$ prime to $N$, the automorphism $\sigma_\ell$ of $K_N$ is defined by $\zeta^{\sigma_\ell}=\zeta^\ell$. For an automorphism $\sigma_\ell$ and a power series $f=\sum_ma_mq^m$ with $a_m\in K_N$, $f^{\sigma_\ell}$ denotes a power series $\sum_m a_m^{\sigma_\ell}q^m$. 
\section{Auxiliary results}
In this section, we summarize the results in \cite{IN2} needed below. For the proof, see \cite{IN2} and further \cite{II2}. Let $N>2$.  For an integer $x$, let $\{x\}$ and $\mu (x)$ be the integers defined  by the following conditions:
\[
\begin{split}
&0\le \{x\}\le \frac N2,\quad \mu (x)=\pm 1,\\
&\begin{cases}\mu(x)=1\qquad &\text{if } x\modx {0,N/2}N,\\
             x\equiv \mu (x)\{x\} \mod N\qquad&\text{otherwise.}
\end{cases}
\end{split}
\]
For a pair of integers $(r,s)$ such that $(r,s)\not\equiv (0,0) \mod N$, consider a modular form of weight $2$ with respect to $\Gamma (N)$ 
$$E(\tau;r,s)=\frac 1{(2\pi i)^2}\wp((r\tau +s)/N;L_{\tau})-1/12.$$ 
\begin{prop}\label{propE} The form $E(\tau;r,s)$ has the following properties.
\begin{enumerate}
\itemx{i} $E(\tau;r+aN,s+bN)=E(\tau;r,s)$ for any integers $a$ and $b$,\\
 $E(\tau;r,s)=E(\tau;-r,-s)$. 
\itemx{ii} $E(\tau;r,s)[A]_2=E(\tau;ar+cs,br+ds)$ for $A=\left(\begin{smallmatrix}a & b \\ c & d \end{smallmatrix}\right)\in \Gamma(1)$.
\itemx{iii}
{\small
\begin{equation*}
E(\tau ;r,s)=
\begin{cases}\displaystyle
\frac{\omega}{(1-\omega)^2}+\sum_{m=1}^{\infty}\sum_{n=1}^{\infty}n(\omega^n+\omega^{-n}-2)q^{mnN}&\text{if }\{r\}=0,\\
\displaystyle\sum_{n=1}^{\infty}n u^n+\displaystyle\sum_{m=1}^{\infty}\sum_{n=1}^{\infty}n(u^n+u^{-n}-2)q^{mnN}&\text{otherwise},
\end{cases}
\end{equation*}
}
where $\omega=\zeta^{\mu (r)s}$ and $u=\omega q^{\{r\}}$.
\itemx{iv}$E(\tau;r,s)^{\sigma_\ell}=E(\tau;r,s\ell)$ for any integer $\ell$ prime to $N$.
\end{enumerate}
\end{prop}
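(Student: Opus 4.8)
The plan is to obtain all four assertions from analytic properties of the Weierstrass $\wp$-function: parts (i) and (ii) are formal consequences of its periodicity, evenness and homogeneity, part (iii) is the classical Fourier expansion of $\wp$ combined with a reduction of the index pair $(r,s)$ to a normalized range, and part (iv) is then read off from (iii).

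\emph{Parts (i) and (ii).} The first identity in (i) holds because $(r+aN)\tau/N+(s+bN)/N$ and $(r\tau+s)/N$ differ by the lattice point $a\tau+b\in L_\tau$, and $\wp(\,\cdot\,;L_\tau)$ is $L_\tau$-periodic; the second holds because $\wp$ is even. For (ii), fix $A=\left(\begin{smallmatrix}a&b\\c&d\end{smallmatrix}\right)\in\Gamma(1)$ and set $\tau'=(a\tau+b)/(c\tau+d)$. Since $ad-bc=1$, the pair $\{a\tau+b,\,c\tau+d\}$ is a $\mathbb Z$-basis of $L_\tau$, so $L_{\tau'}=(c\tau+d)^{-1}L_\tau$; a direct computation also gives $(r\tau'+s)/N=(c\tau+d)^{-1}\big((ar+cs)\tau+(br+ds)\big)/N$. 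Applying the homogeneity relation $\wp(\lambda z;\lambda L)=\lambda^{-2}\wp(z;L)$ with $\lambda=(c\tau+d)^{-1}$ to these data yields $\wp\big((r\tau'+s)/N;L_{\tau'}\big)=(c\tau+d)^{2}\,\wp\big(((ar+cs)\tau+(br+ds))/N;L_\tau\big)$, and dividing by $(2\pi i)^2(c\tau+d)^2$ gives $E(\tau;r,s)[A]_2=E(\tau;ar+cs,br+ds)$.

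\emph{Part (iii).} I would start from the classical Fourier expansion of $\wp$ (see \cite{LS}): with $q_z=e^{2\pi i z}$ and $q_\tau=e^{2\pi i\tau}=q^{N}$,
\[
\frac1{(2\pi i)^2}\wp(z;L_\tau)=\frac1{12}+\frac{q_z}{(1-q_z)^2}+\sum_{m=1}^{\infty}\sum_{n=1}^{\infty}n\,q_\tau^{mn}\big(q_z^{n}+q_z^{-n}-2\big),
\]
where the $m=0$ term $q_z/(1-q_z)^2$ is kept unexpanded when $|q_z|=1$ (with $q_z\ne1$) and equals $\sum_{n\ge1}nq_z^{n}$ when $|q_z|<1$. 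Setting $z=(r\tau+s)/N$ we have $q_z=\zeta^{s}q^{r}$, and I would use (i) to normalize $(r,s)$. If $r\equiv0\bmod N$, then $\{r\}=0$ and $\mu(r)=1$, and by periodicity we may take $r=0$; then $q_z=\zeta^{s}=\omega$, a root of unity $\ne1$ since $(r,s)\not\equiv(0,0)$, and the first case of (iii) follows after subtracting $1/12$. If $r\not\equiv0\bmod N$, then $0<\{r\}\le N/2$ and $r\equiv\mu(r)\{r\}\bmod N$; using periodicity and, when $\mu(r)=-1$, also the evenness relation of (i), we may replace $(r,s)$ by $\big(\{r\},\mu(r)s\big)$, so that $q_z=\zeta^{\mu(r)s}q^{\{r\}}=u$ with $|u|=|q|^{\{r\}}<1$. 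Substituting $q_z=u$, using $q_z/(1-q_z)^2=\sum_{n\ge1}nu^{n}$, and subtracting $1/12$ gives the second case.

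\emph{Part (iv) and the main obstacle.} Since $\sigma_\ell$ fixes $q$ and sends $\zeta$ to $\zeta^{\ell}$, it sends $\omega=\zeta^{\mu(r)s}$ to $\zeta^{\mu(r)s\ell}=\zeta^{\mu(r)(s\ell)}$ and leaves $\{r\}$ unchanged; hence applying $\sigma_\ell$ termwise to the expansion of $E(\tau;r,s)$ in (iii) produces exactly the expansion of $E(\tau;r,s\ell)$, whose $r$-data is identical, so $E(\tau;r,s)^{\sigma_\ell}=E(\tau;r,s\ell)$. Parts (i), (ii), (iv) are routine; the only real work is (iii), and there the delicate point is the case analysis together with the correct normalization of $(r,s)$, so that one always lands in the range $0\le\{r\}\le N/2$ with $|q_z|\le 1$ — i.e. so that the geometric series in $q_z$ converges — while isolating the exceptional term $\omega/(1-\omega)^2$ precisely in the boundary case $|q_z|=1$.
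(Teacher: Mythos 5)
The paper itself offers no argument for this proposition --- it is imported verbatim from \cite{IN2} (see also \cite{II2}) --- so there is no in-text proof to compare yours against. What you give is the standard derivation and it is essentially correct: (i) follows from the $L_\tau$-periodicity and evenness of $\wp$; (ii) from the exact homogeneity $\wp(\lambda z;\lambda L)=\lambda^{-2}\wp(z;L)$ together with $L_{A\tau}=(c\tau+d)^{-1}L_\tau$; (iii) from the classical Fourier expansion of $\wp$ after using (i) to replace $(r,s)$ by $(\{r\},\mu(r)s)$, which is exactly what forces $q_z=u=\omega q^{\{r\}}$ with $|q_z|\le 1$; and (iv) by applying $\sigma_\ell$ coefficientwise to (iii), noting that $\{r\}$ and $\mu(r)$ are unchanged while $\omega\mapsto\zeta^{\mu(r)s\ell}$. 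Your treatment of the boundary cases in (iii) ($\{r\}=0$, where the $m=0$ term must be left as $\omega/(1-\omega)^2$ with $\omega\ne1$ because $(r,s)\not\equiv(0,0)$, and $r\equiv N/2$, where $\mu(r)=1$ by convention) is the delicate point and you handle it correctly.

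One caveat, which concerns the statement as much as your write-up of (ii): the additive constant $-1/12$ in the definition of $E(\tau;r,s)$ is not invariant under the weight-$2$ action, since $c_0[A]_2=c_0(c\tau+d)^{-2}$ for a constant $c_0$. Your homogeneity computation establishes the exact identity for $\tfrac{1}{(2\pi i)^2}\wp\bigl((r\tau+s)/N;L_\tau\bigr)$, but for $E$ itself one picks up the extra term $\tfrac{1}{12}\bigl(1-(c\tau+d)^{-2}\bigr)$, so the final sentence of your argument for (ii) does not literally follow from the displayed $\wp$-identity. This is harmless for every use made in the paper --- only differences $E(\tau;r_1,s_1)-E(\tau;r_2,s_2)$ enter (in the definition of $\Lambda$ and in Proposition~\ref{prop0}), and there the constant cancels --- but if you want (ii) verbatim you should either state it for the $\wp$-part or record the elementary correction term explicitly.
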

\begin{prop}\label{prop0} Let $(r_1,s_1)$ and $(r_2,s_2)$ be pairs of integers such that\\
 $(r_1,s_1),~(r_2,s_2)\nmodx{(0,0)}N$ and $(r_1,s_1)\nmodx{\pm (r_2,s_2)}N$. Put $\omega_i=\zeta^{\mu (r_i)s_i}$. Assume that $\{r_1\}\leq \{r_2\}$.  Then
\[E(\tau;r_1,s_1)-E(\tau;r_2,s_2)=\theta q^{\{r_1\}}(1+qh(q)),\]
where $h(q)\in O_N[[q]]$ and $\theta$ is a non-zero element of $K_N$ defined as follows.
In the case of $\{r_1\}=\{r_2\}$,
\[
\theta =\begin{cases}\omega_1-\omega_2\quad&\text{if }\{r_1\}\ne 0,N/2,\\
          -\displaystyle\frac{(\omega_1-\omega_2)(1-\omega_1\omega_2)}{\omega_1\omega_2}\quad&\text{if }\{r_1\}=N/2,\\
\displaystyle\frac{(\omega_1-\omega_2)(1-\omega_1\omega_2)}{(1-\omega_1)^2(1-\omega_2)^2}\quad&\text{if }\{r_1\}=0.
\end{cases}
\]
In the case of $\{r_1\}<\{r_2\}$,
\[
\theta =\begin{cases}\displaystyle \omega_1\quad&\text{if }\{r_1\}\ne 0,\\
\displaystyle\frac{\omega_1}{(1-\omega_1)^2}\quad&\text{if }\{r_1\}=0.
\end{cases}
\]
Further $ E(\tau;r_1,s_1)-E(\tau;r_2,s_2)$ has neither zeros nor poles on $\H$.
\end{prop}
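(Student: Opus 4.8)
The plan is to read everything off the $q$-expansions of Proposition~\ref{propE}(iii). Write $E_i=E(\tau;r_i,s_i)$, $t_i=\{r_i\}$, $u_i=\omega_i q^{t_i}$, so $t_1\le t_2$ by hypothesis; note that $-r\equiv r\pmod N$ holds exactly when $\{r\}\in\{0,N/2\}$, so the assumption $(r_1,s_1)\nmodx{\pm(r_2,s_2)}N$ forces, in the case $t_1=t_2$, that $r_1\equiv\pm r_2\pmod N$. I would split into the two displayed cases $t_1=t_2$ and $t_1<t_2$, and inside each into the subcases $t_1=0$, $t_1=N/2$, $0<t_1<N/2$. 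In each subcase one substitutes the expansion of Proposition~\ref{propE}(iii) into $E_1-E_2$ and locates the term of lowest order in $q$. For $t_1\ne 0$ this is $\theta q^{t_1}$, and, for those $i$ with $t_i=t_1$, the coefficient of $q^{t_1}$ picks up $\omega_i$ from the summand $u_i$ of $\sum_n nu_i^n$ and, exactly when $t_1=N/2$, also $\omega_i^{-1}$ from the $m=n=1$ term $u_i^{-1}q^N=\omega_i^{-1}q^{N/2}$ of the double sum, all remaining summands of $E_1$ and $E_2$ having $q$-exponent $>t_1$ (here $t_i\le N/2$ is used); for $t_1=0$ the lowest-order term is the constant term, equal to $\omega_1/(1-\omega_1)^2$ minus the constant term of $E_2$ (which is $\omega_2/(1-\omega_2)^2$ if $t_2=0$ and $0$ if $t_2>0$, the double sums contributing no constant). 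Collecting these and simplifying via
\[
(\omega_1+\omega_1^{-1})-(\omega_2+\omega_2^{-1})=-\frac{(\omega_1-\omega_2)(1-\omega_1\omega_2)}{\omega_1\omega_2},\qquad \frac{\omega_1}{(1-\omega_1)^2}-\frac{\omega_2}{(1-\omega_2)^2}=\frac{(\omega_1-\omega_2)(1-\omega_1\omega_2)}{(1-\omega_1)^2(1-\omega_2)^2}
\]
reproduces exactly the asserted expressions for $\theta$, with $\theta=\omega_1$, resp.\ $\theta=\omega_1/(1-\omega_1)^2$, in the case $t_1<t_2$ (where the double sums and the leading term of $E_2$ reach only exponents $>t_1$).

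Next I would check $\theta\neq0$ directly from the hypotheses: for $t_1=t_2\neq0$ the excluded congruences give $\omega_1\neq\omega_2$, and when $t_1=N/2$ also $\omega_1\omega_2\neq1$; for $t_1=t_2=0$ they give $\omega_i\neq1$, $\omega_1\neq\omega_2$ and $\omega_1\omega_2\neq1$; for $t_1<t_2$ one needs only $\omega_1\neq1$, which holds since $(r_1,s_1)\nmodx{(0,0)}N$. For the factor $1+qh(q)$ with $h\in O_N[[q]]$: all Fourier coefficients of $E_1-E_2$ lie in $O_N$ except a possible constant, which occurs only when $t_1=0$ and equals $\theta$ there. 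If $t_1<t_2$ then $\theta^{-1}\in O_N$, so $h\in O_N[[q]]$ at once. If $t_1=t_2$ with $0<t_1<N/2$, the coefficient of each $q^k$ ($k>t_1$) in $E_1-E_2$ is an $O_N$-combination of terms $\omega_1^{\,j}-\omega_2^{\,j}$ ($j\in\mathbb Z$), each divisible by $\omega_1-\omega_2=\theta$. If $t_1=t_2\in\{0,N/2\}$, the symmetry $E(\tau;r,s)=E(\tau;-r,-s)$ (which sends $\omega_i\mapsto\omega_i^{-1}$) shows the same coefficient is an $O_N$-combination of terms $\omega_1^{\,j}+\omega_1^{-j}-\omega_2^{\,j}-\omega_2^{-j}$, and the identity
\[
x^{\,j}+x^{-j}-y^{\,j}-y^{-j}=(x-y)(xy-1)(xy)^{-j}\Bigl(\textstyle\sum_{a=0}^{j-1}x^ay^{\,j-1-a}\Bigr)\Bigl(\textstyle\sum_{b=0}^{j-1}(xy)^b\Bigr)
\]
in $\mathbb Z[x^{\pm1},y^{\pm1}]$ makes each such term divisible in $O_N$ by $(\omega_1-\omega_2)(1-\omega_1\omega_2)$, hence by $\theta$. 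In all subcases $h\in O_N[[q]]$ follows.

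Finally, for the absence of zeros and poles on $\H$ I would abandon the expansions and use $E(\tau;r,s)=(2\pi i)^{-2}\wp((r\tau+s)/N;L_\tau)-1/12$. Since $(r_i,s_i)\nmodx{(0,0)}N$, the point $(r_i\tau+s_i)/N$ lies off $L_\tau$ for every $\tau\in\H$, so each $\wp((r_i\tau+s_i)/N;L_\tau)$, and hence $E_1-E_2$, is holomorphic on $\H$: no poles. If $E_1-E_2$ vanished at some $\tau_0\in\H$, then $\wp((r_1\tau_0+s_1)/N;L_{\tau_0})=\wp((r_2\tau_0+s_2)/N;L_{\tau_0})$, which forces $(r_1\tau_0+s_1)/N\equiv\pm(r_2\tau_0+s_2)/N\pmod{L_{\tau_0}}$; since $1$ and $\tau_0$ are $\mathbb R$-linearly independent, this is equivalent to $(r_1,s_1)\equiv\pm(r_2,s_2)\pmod N$, contradicting the hypothesis. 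Hence $E_1-E_2$ is nowhere zero on $\H$. The real work here is bookkeeping, not ideas: in the case $t_1=t_2=N/2$ one must not forget that the ``negative'' summands $u_i^{-n}q^{mnN}$ of the double sum descend exactly to the exponent $N/2$, and one must verify that after the pure $(-2)$-contributions cancel in the difference the surviving coefficients are divisible not merely by $\omega_1-\omega_2$ but by the full $\theta$ — which is precisely what the displayed polynomial identity provides.
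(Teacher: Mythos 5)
The paper does not actually prove Proposition~\ref{prop0} — it is quoted from \cite{IN2} — so your argument can only be judged on its own terms: it is correct, and it is the natural (surely the intended) derivation directly from the $q$-expansions of Proposition~\ref{propE}(iii). The two points that genuinely require care — the extra $\omega_i^{-1}q^{N/2}$ contribution from the $m=n=1$ term of the double sum when $\{r_1\}=\{r_2\}=N/2$, and the divisibility of all higher coefficients by the full $\theta$ (not merely by $\omega_1-\omega_2$) via your identity for $x^{\,j}+x^{-j}-y^{\,j}-y^{-j}$ — are both handled correctly, as are the non-vanishing of $\theta$ from the non-congruence hypotheses and the zero/pole-freeness on $\H$ via the evenness and degree of $\wp$.
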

  If $Q_i=(r_i,s_i)~(i=1,2)$, then by the definition 
\begin{equation*}
\Lambda(\tau;Q_1,Q_2)=\frac{E(\tau;r_1,s_1)-E(\tau;r_1+r_2,s_1+s_2)}{E(\tau;r_2,s_2)-E(\tau;r_1+r_2,s_1+s_2)}.
\end{equation*}
Let denote $\left(\begin{smallmatrix}Q_1\\Q_2\end{smallmatrix}\right)$ a matrix $\left(\begin{smallmatrix}r_1&s_1\\r_2&s_2\end{smallmatrix}\right)\in \text{M}_2(\mathbb Z/N\mathbb Z)$.
In what follows, for an integer $k$ prime to $N$ (considered modulo $N$), the function $\Lambda(\tau;(1,0),(0,k))$ is denoted by $\Lambda_k(\tau)$ to simplify the notation. Furthermore  $\Lambda_1(\tau)$ is denoted by $\Lambda(\tau)$. 

\begin{thm}\label{prop1}
Let $\{Q_1,Q_2\}$ be a basis of the group $\mathbb Z/N\mathbb Z\oplus\mathbb Z/N\mathbb Z$. 
\begin{enumerate}

 \itemx{i} $\Lambda(\tau;Q_1,Q_2)$ has zeros and poles only at cusps of $\Gamma(N)$ and has a $q$-expansion with coefficients in $K_N$. 
\itemx{ii} $(1-\zeta)^3\Lambda(\tau;Q_1,Q_2)$ is integral over $O_N[j]$. Especially $\Lambda(\tau;Q_1,Q_2)$ is a unit of the integral closure of $ O_{N,1-\zeta}[j]$ in $A(N)$, where $O_{N,1-\zeta}$ is the localization of $O_N$ with respect to the multiplicative set $\{(1-\zeta)^n~|~n\in \mathbb Z,n\geq 0\}$. Further if $N$ is not a prime power, then $\Lambda(\tau;Q_1,Q_2)$ is a unit of the integral closure of $O_N[j]$ in $A(N)$. 
\itemx{iii} If $N\ne 6$, then $A(N)=K_N(j,\Lambda(\tau;Q_1,Q_2))$.
\itemx{iv} Let $k=\det\left(\begin{smallmatrix}Q_1\\Q_2\end{smallmatrix}\right)$ and let $A\in\Gamma(1)$ such that $\left(\begin{smallmatrix}Q_1\\Q_2\end{smallmatrix}\right)=\left(\begin{smallmatrix}1&0\\0&k\end{smallmatrix}\right)(A\mod N)$. Then
 \[\Lambda(\tau;Q_1,Q_2)=\Lambda_{k}\circ A.\]
\itemx{v}Let $k$ be an integer prime to $N$ and $A=\left(\begin{smallmatrix}a&b\\c&d\end{smallmatrix}\right)\in \Gamma(1)$. 
Then \[(\Lambda\circ A)^{\sigma_k}=\Lambda_k\circ A_k,\]
where $A_k\in\Gamma(1)$ such that $\displaystyle A_k\equiv \left(\begin{smallmatrix}a&bk\\ck^{-1}&d\end{smallmatrix}\right)\mod N$.
\end{enumerate}
\end{thm}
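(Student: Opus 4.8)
The five assertions I would take in the order (i), then (iv) and (v), then (ii), then (iii). Write $E(Q)=E(\tau;r,s)$ when $Q=(r,s)$, so that $\Lambda(\tau;Q_1,Q_2)=\bigl(E(Q_1)-E(Q_1+Q_2)\bigr)/\bigl(E(Q_2)-E(Q_1+Q_2)\bigr)$. For (i): since $\{Q_1,Q_2\}$ is a basis, none of $Q_1$, $Q_2$, $Q_1+Q_2$ is $\equiv(0,0)$ and no two of them are congruent up to sign, modulo $N$; so Proposition \ref{prop0} applies to both $E(Q_1)-E(Q_1+Q_2)$ and $E(Q_2)-E(Q_1+Q_2)$, each of which is therefore a weight-$2$ form for $\Gamma(N)$ with no zero or pole on $\H$ and $q$-expansion $\theta q^m(1+qh(q))$, $\theta\in K_N^{\times}$, $h\in O_N[[q]]$. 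Their quotient $\Lambda(\tau;Q_1,Q_2)$ is then $\Gamma(N)$-invariant (for $A\in\Gamma(N)$, Proposition \ref{propE}(i),(ii) give $E(Q)[A]_2=E(QA)=E(Q)$, so the automorphy factors cancel), holomorphic and nonvanishing on $\H$, with $q$-expansion in $K_N((q))$; hence it lies in $A(N)$ and has zeros and poles only at cusps.

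For (iv) and (v): Proposition \ref{propE}(ii) reads $E(Q)\circ A=(c\tau+d)^2E(QA)$ for $A=\left(\begin{smallmatrix}a&b\\c&d\end{smallmatrix}\right)\in\Gamma(1)$, $QA$ the row vector $(r,s)A$; the automorphy factors cancel in the quotient, so $\Lambda(\tau;Q_1,Q_2)\circ A=\Lambda(\tau;Q_1A,Q_2A)$, and $\{Q_1A,Q_2A\}$ is again a basis with $\det\left(\begin{smallmatrix}Q_1A\\Q_2A\end{smallmatrix}\right)=\det\left(\begin{smallmatrix}Q_1\\Q_2\end{smallmatrix}\right)$. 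Since $\mathrm{SL}_2(\mathbb Z)\twoheadrightarrow\mathrm{SL}_2(\mathbb Z/N\mathbb Z)$ and $k=\det\left(\begin{smallmatrix}Q_1\\Q_2\end{smallmatrix}\right)$ is a unit mod $N$, the class of $\left(\begin{smallmatrix}1&0\\0&k\end{smallmatrix}\right)^{-1}\left(\begin{smallmatrix}Q_1\\Q_2\end{smallmatrix}\right)$ has determinant $1$ and lifts to some $A\in\Gamma(1)$, and then $\Lambda_k\circ A=\Lambda(\tau;(1,0)A,(0,k)A)=\Lambda(\tau;Q_1,Q_2)$ (the last step by Proposition \ref{propE}(i)), which is (iv). For (v), applying $\sigma_k$ to $q$-expansions is a field automorphism of $K_N((q))$ fixing $q$ and, by Proposition \ref{propE}(iv), carrying $E(\tau;r,s)$ to $E(\tau;r,sk)$; writing $\Lambda\circ A=\Lambda(\tau;(1,0)A,(0,1)A)$ as a quotient of three $E(Q)$'s, applying $\sigma_k$ termwise, and writing $\Lambda_k\circ A_k$ likewise for a lift $A_k\equiv\left(\begin{smallmatrix}a&bk\\ck^{-1}&d\end{smallmatrix}\right)\mod N$ (of determinant $\equiv1$, hence liftable to $\Gamma(1)$), one finds the two $q$-expansions agree, hence so do the modular functions.

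For (ii): by (i), $\Lambda(\tau;Q_1,Q_2)$ is holomorphic on $\H$, so (as $j$ maps onto $\H$ and $A(N)/K_N(j)$ is finite) it, and likewise $\Lambda(\tau;Q_1,Q_2)^{-1}=\Lambda(\tau;Q_2,Q_1)$, is integral over $K_N[j]$. To descend to $O_N$, observe that every $K_N(j)$-conjugate of $\Lambda(\tau;Q_1,Q_2)$ is, by (iv), again a generalized lambda function for a basis, with $q$-expansion of the form $\theta q^{m}u(q)$, $u\in1+qO_N[[q]]$, where by Proposition \ref{prop0} $\theta$ is a quotient of the explicit constants attached to its numerator and denominator; the basis condition forces the relevant combinations of $s$-coordinates to be units modulo $N$, so each factor $1-\omega$ occurring in $\theta$ is a unit times $1-\zeta$, and a short case check over the possibilities in Proposition \ref{prop0} gives $(1-\zeta)^3\theta\in O_N$. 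Hence every conjugate of $(1-\zeta)^3\Lambda(\tau;Q_1,Q_2)$ has $q$-expansion in $O_N((q))$, so the coefficients of its minimal polynomial over $K_N(j)$ lie in $K_N[j]$ with $q$-expansions in $O_N((q))$, hence in $O_N[j]$ (compare with $j=q^{-N}+744+\cdots\in\mathbb Z((q))$, descending on the degree in $j$). Thus $(1-\zeta)^3\Lambda(\tau;Q_1,Q_2)$ is integral over $O_N[j]$. Since $1-\zeta$ is a unit of $O_{N,1-\zeta}$, both $\Lambda(\tau;Q_1,Q_2)$ and its inverse are integral over $O_{N,1-\zeta}[j]$, which is the unit assertion; and if $N$ is not a prime power, $N_{K_N/\mathbb Q}(1-\zeta)=\Phi_N(1)=\pm1$, so $1-\zeta\in O_N^{\times}$, $O_{N,1-\zeta}=O_N$, giving the last claim.

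For (iii): $A(N)/K_N(j)$ is Galois with group $G\cong\mathrm{SL}_2(\mathbb Z/N\mathbb Z)/\{\pm1\}$ acting by $\gamma\cdot f=f\circ\tilde\gamma$, and by (iv) a class $\gamma$ fixes $\Lambda(\tau;Q_1,Q_2)$ exactly when $\Lambda(\tau;Q_1\tilde\gamma,Q_2\tilde\gamma)=\Lambda(\tau;Q_1,Q_2)$ as functions; hence it suffices to show that for $N\ne6$ distinct bases modulo $\pm1$ yield distinct generalized lambda functions, for then $\mathrm{Stab}_G(\Lambda(\tau;Q_1,Q_2))=1$ and $A(N)=K_N(j,\Lambda(\tau;Q_1,Q_2))$. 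I would prove this distinctness by reading off from Proposition \ref{prop0} the order and leading coefficient of $\Lambda(\tau;Q_1,Q_2)$ at the cusp $\infty$ and at every cusp represented by an $A\in\Gamma(1)$ (where $\Lambda(\tau;Q_1,Q_2)$ behaves like $\Lambda(\tau;Q_1A,Q_2A)$ at $\infty$), and checking that this collection of local data determines $\{Q_1,Q_2\}$ up to sign precisely when $N\ne6$. The real difficulty lies in two places: the sharp bound $(1-\zeta)^3$ in (ii), i.e.\ verifying that the cases $\{r'\}=0$ of Proposition \ref{prop0} --- the ``Klein-form'' denominators --- never do worse; and the finite combinatorial verification in (iii) that cusp data separates bases, together with singling out $N=6$ as the unique exception.
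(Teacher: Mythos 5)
Your arguments for (i), (iv) and (v) are correct, and your handling of the last two sentences of (ii) --- pass to $1/\Lambda(\tau;Q_1,Q_2)=\Lambda(\tau;Q_2,Q_1)$ and use that $1-\zeta$ is a unit of $O_{N,1-\zeta}$, and a unit of $O_N$ when $N$ is not a prime power --- is precisely the one argument the paper itself writes out; for everything else the paper simply cites \cite{IN2}. So you are attempting to reprove considerably more than the paper does, and the two hardest pieces are left as plans rather than proofs. For the exponent $3$ in (ii): the observation that each $1-\omega$ in a denominator is a unit times $1-\zeta$ is not by itself enough, because a priori $\theta_1$ can contribute $(1-\zeta)^{-2}$ (the case $\{r\}=0$ of Proposition~\ref{prop0}) while $1/\theta_2$ contributes another $(1-\zeta)^{-2}$ (the case $\{r_1\}=\{r_2\}=N/2$, where $\theta_2$ is a unit times $(1-\zeta^{a})(1-\zeta^{b})$), giving $(1-\zeta)^{-4}$. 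What saves the bound is that the cases $\{r_1\}=\{r_2\}=0$ and $\{r_1\}=\{r_2\}=N/2$ of Proposition~\ref{prop0} never occur for two members of the triple $Q_1,Q_2,Q_1+Q_2$ attached to a basis: two of the three rows having first coordinate $\equiv 0$ or $\equiv N/2$ forces the determinant to be divisible by $N/2$. With those cases excluded the worst combination is $\theta_1=\omega/(1-\omega)^2$ against $\theta_2=\omega_1-\omega_2=(\text{unit})\cdot(1-\zeta)$, which is exactly $(1-\zeta)^{-3}$. You flag this verification as a difficulty and do not carry it out, so (ii) as written is incomplete, though the check does go through.

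The more serious gap is (iii). You correctly reduce it, via Galois theory and your formula $\Lambda\circ A=\Lambda(\tau;(1,0)A,(0,1)A)$, to the statement that for $N\ne 6$ the only $A\in \mathrm{SL}_2(\mathbb Z/N\mathbb Z)$ with $\Lambda(\tau;(1,0)A,(0,1)A)=\Lambda$ is $A=\pm I$. But that statement \emph{is} the theorem --- it is the main result of \cite{IN2} --- and ``read off the order and leading coefficient of $\Lambda\circ A$ at every cusp from Proposition~\ref{prop0} and check that this data determines the basis up to sign precisely when $N\ne 6$'' is a statement of intent, not an argument. Nothing in your write-up identifies which cusp invariants actually separate two candidate bases, carries out the comparison for even one pair, or explains why $N=6$ fails. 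Until that finite but substantial verification is done, (iii) is unproved; the honest course here (and the one the paper takes) is to quote \cite{IN2} for it.
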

\begin{proof}
The assertions except the latter part of (ii) are proved in \cite{IN2}. The latter part of (ii) is deduced from the facts that $1/\Lambda(\tau,Q_1,Q_2)=\Lambda(\tau,Q_2,Q_1)$ and $(1-\zeta)$ is a unit of $O_{N,1-\zeta}$, and that $(1-\zeta)$ is a unit of $O_N$ if $N$ is not a prime power.
\end{proof}
\section{Minimal equation of $\Lambda$ over $K_N(j)$}
 Let us consider a polynomial of $X$ with coefficients in $A(1)$;
$$ \F(X)=\prod_{A\in\mathfrak R}(X-\Lambda\circ A),$$
 where $\mathfrak R$ is a transversal of the coset decomposition of $\Gamma(1)$ by $\pm\Gamma(N)$.
\begin{prop} Let $N>2$ and $N\ne 6$. 
\begin{enumerate}
\itemx i $\F(X)$ is a polynomial in $K_N[j][X]$. The degree of $\F(X)$ is equal to $d_N=\frac {N^3}2\prod_{p|N}(1-p^{-2})$, where $p$ runs over all prime divisors of $N$.
\itemx {ii} Let $\{Q_1,Q_2\}$ be a basis of $\mathbb Z/N\mathbb Z\oplus\mathbb Z/N\mathbb Z$ and $k=\det\left(\begin{smallmatrix}Q_1\\Q_2\end{smallmatrix}\right)$. Then $\F(X)^{\sigma_k}$ is a minimal equation of $\Lambda(\tau,Q_1,Q_2)$ over $K_N(j)$.
\end{enumerate}
\end{prop}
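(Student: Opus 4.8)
The plan is to treat (i) first, which is essentially formal, and then (ii).

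For (i): I would first note that $\F(X)$ is independent of the choice of transversal. If $B\in\pm\Gamma(N)$ then $\Lambda\circ(BA)=\Lambda\circ A$, since $\Lambda$ lies in $A(N)$ (Theorem \ref{prop1}(i)) and $-I$ acts trivially on $\H$, so each factor $X-\Lambda\circ A$ depends only on the coset $\pm\Gamma(N)A$; as right multiplication by any $B\in\Gamma(1)$ permutes these cosets, $\F(X)\circ B=\F(X)$, and the coefficients of $\F(X)$ are modular functions for $\Gamma(1)$. By Proposition \ref{propE}(ii), $\Lambda\circ A=\Lambda(\tau;(a,b),(c,d))$ for $A=\left(\begin{smallmatrix}a&b\\c&d\end{smallmatrix}\right)$, with $\{(a,b),(c,d)\}$ a basis of $\mathbb Z/N\mathbb Z\oplus\mathbb Z/N\mathbb Z$, so by Theorem \ref{prop1}(i) each $\Lambda\circ A$ is holomorphic on $\H$ and has a $q$-expansion over $K_N$. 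Hence every coefficient of $\F(X)$, being an elementary symmetric function of the $\Lambda\circ A$, is $\Gamma(1)$-invariant, holomorphic on $\H$, and meromorphic at the cusp, so it is a polynomial in $j$; comparing its $q$-expansion (a Laurent series in $q^N$) with that of $j=q^{-N}+744+\cdots\in\mathbb Q((q^N))$ and solving for the coefficients of this polynomial one at a time shows they lie in $K_N$. Thus $\F(X)\in K_N[j][X]$, and $\deg\F=|\R|=[\Gamma(1):\pm\Gamma(N)]=\frac12[\Gamma(1):\Gamma(N)]=d_N$, the factor $\frac12$ coming from $-I\notin\Gamma(N)$ for $N>2$.

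For (ii): Let $\{Q_1,Q_2\}$ be a basis and $k=\det\left(\begin{smallmatrix}Q_1\\Q_2\end{smallmatrix}\right)$. Since the coefficients of $\F$ lie in $K_N[j]$, each $\Lambda\circ A$ is a Laurent series over $K_N$, and $\sigma_k$ acts as a ring automorphism of $K_N((q))$ fixing $\mathbb Q$, applying $\sigma_k$ to $q$-expansions yields $\F(X)^{\sigma_k}=\prod_{A\in\R}\bigl(X-(\Lambda\circ A)^{\sigma_k}\bigr)$. By Theorem \ref{prop1}(v), $(\Lambda\circ A)^{\sigma_k}=\Lambda_k\circ A_k$ with $A_k\equiv\left(\begin{smallmatrix}1&0\\0&k\end{smallmatrix}\right)^{-1}A\left(\begin{smallmatrix}1&0\\0&k\end{smallmatrix}\right)\mod N$. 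Conjugation by $\left(\begin{smallmatrix}1&0\\0&k\end{smallmatrix}\right)$ is a bijection of $\mathrm{SL}_2(\mathbb Z/N\mathbb Z)$ commuting with $\{\pm I\}$, so as $A$ ranges over $\R$ the matrices $A_k$ range over a full set of representatives for $\pm\Gamma(N)\backslash\Gamma(1)$; since $\Lambda_k\in A(N)$ is invariant under $\pm\Gamma(N)$, this gives $\F(X)^{\sigma_k}=\prod_{B\in\R}(X-\Lambda_k\circ B)$. By Theorem \ref{prop1}(iv), $\Lambda(\tau;Q_1,Q_2)=\Lambda_k\circ A$ for a suitable $A\in\Gamma(1)$, which occurs among these roots; hence $\F(X)^{\sigma_k}$ is monic of degree $d_N$ over $K_N(j)$ and vanishes at $\Lambda(\tau;Q_1,Q_2)$. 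Finally, by Theorem \ref{prop1}(iii), $K_N(j,\Lambda(\tau;Q_1,Q_2))=A(N)$, and $[A(N):K_N(j)]=d_N$ by the standard structure of the modular function field ($A(N)/K_N(j)$ Galois with group $\mathrm{SL}_2(\mathbb Z/N\mathbb Z)/\{\pm I\}$, of order $d_N$); a monic degree-$d_N$ polynomial over $K_N(j)$ vanishing at $\Lambda(\tau;Q_1,Q_2)$ must therefore be its minimal polynomial, which proves (ii). (The case $k=1$ of this argument says $\F(X)$ itself is the minimal polynomial of $\Lambda$ over $K_N(j)$.)

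The polynomiality in $j$ in (i) and the degree/index computations are routine once Theorem \ref{prop1} is in hand. I expect the main difficulty to be the bookkeeping in (ii) around $\sigma_k$: invoking Theorem \ref{prop1}(v) correctly (the twist is conjugation by $\mathrm{diag}(1,k)$, not multiplication by $A$), checking that $\F^{\sigma_k}$ is precisely the polynomial whose roots are the $\sigma_k$-images of the roots of $\F$, and keeping track of how the transversal $\R$ is permuted under $A\mapsto A_k$ — all of which relies on the arithmetic of the modular function field developed in \cite{IN2} (and \cite{II2}).
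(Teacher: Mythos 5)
Your proof is correct and follows essentially the same route as the paper: the paper defers part (i) to the proof of Theorem 4.4 in \cite{IN2} and derives (ii) from Theorem \ref{prop1} (iii)--(v) together with the degree count $[A(N):K_N(j)]=d_N$, exactly as you do. You merely make explicit the details the paper leaves implicit (the $\Gamma(1)$-invariance of the coefficients and the permutation of the transversal under $A\mapsto A_k$).
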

\begin{proof}
For (i), see the proof of Theorem 4.4 in \cite{IN2}. The degree $d_N$ is obviously equal to the index $[\Gamma(1):\pm\Gamma(N)]=\frac {N^3}2\prod_{p|N}(1-p^{-2})$. See 1.6 of \cite{SG}. Since $N\ne 6$, Theorem~\ref{prop1} (iii) implies that $\F(X)$ is a minimal equation of $\Lambda\circ A$ over $K_N(j)$ for any $A\in\Gamma(1)$. Therefore Theorem~\ref{prop1}~(iv) and (v) show that the equation $\F^{\sigma_k}(X)=0$ is a minimal equation of $\Lambda(\tau,Q_1,Q_2)$. 
\end{proof}
The transversal $\mathfrak R$ is adopted as follows. By \cite{Ogg}, inequivalent cusps of $\Gamma(N)$ correspond bijectively to elements of the set $\Sigma$ of pairs of integers $(a,c)$, where $a,c\in\mathbb Z/N\mathbb Z,GCD(a,c,N)=1$ and $(a,c)$ and $(-a,-c)$ are identified. 
We decompose $\Sigma$ into two disjoint subsets $\Sigma_1$ and $\Sigma_2$ where $\Sigma_1=\{\{(a,c),(c,a)\}|a\not\equiv \pm c \mod N\}$ and $\Sigma_2=\{(a,c)| a\equiv \pm c\mod N\}$. For each element $(a,c)\in\Sigma$, we take and fix a matrix $A\in \Gamma(1)$ corresponding to $(a,c)$ so that $A\equiv \left(\begin{smallmatrix}a&*\\c&*\end{smallmatrix}\right) \mod N$. If $(a,c)\in \Sigma_1$ and $A=\left(\begin{smallmatrix}x&y\\z&w\end{smallmatrix}\right)$ is the corresponding matrix to $(a,c)$, we choose $A'=\left(\begin{smallmatrix}z&-w\\x&-y\end{smallmatrix}\right)$ as a matrix corresponding to $(c,a)$. Denoting by $\mathfrak S$ the set of all corresponding matrices, with $T=\left(\begin{smallmatrix}1&1\\0&1\end{smallmatrix}\right)$,
 $$\mathfrak R=\{AT^i~|~A\in\mathfrak S,i\in\mathbb Z/N\mathbb Z\}.$$
 Further, let $\mathfrak S_1$  be the set consisted of all pairs $\{A,A'\}$ corresponding to the pairs $\{(a,c),(c,a)\}$ in $\Sigma_1$ and $\mathfrak S_2$ the set consisted of all matrices corresponding to the elements of $\Sigma_2$. 
\begin{prop}\label{prop3}Let $\nu(A)$ be the order of the $q$-expansion of $\Lambda\circ A$ for $A\in \Gamma(1)$.
\begin{enumerate}
\itemx{i} If $A\equiv\left(\begin{smallmatrix}a&*\\c&*\end{smallmatrix}\right)\mod N$, then 
$$\nu(A)=\min(\br{a},\br{a+c})-\min(\br{c},\br{a+c}).$$
\itemx{ii} Let $\{A,A'\}\in\mathfrak S_1$. Then $\nu(A)=\nu(AT^i)=-\nu(A'T^i)$ for any $i\in\mathbb Z$.
\itemx{iii} Let $A\in\mathfrak S_2$. Then $\nu(AT^i)=0$ for any $i\in\mathbb Z$.
\end{enumerate}
\end{prop}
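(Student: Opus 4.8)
The plan is first to make $\Lambda\circ A$ completely explicit. Write $A=\left(\begin{smallmatrix}a&b\\c&d\end{smallmatrix}\right)$. Since $\Lambda=\Lambda_1=\Lambda(\tau;(1,0),(0,1))$, the expression for $\Lambda(\tau;Q_1,Q_2)$ in terms of the forms $E$ recalled above gives
\[
\Lambda(\tau)=\frac{E(\tau;1,0)-E(\tau;1,1)}{E(\tau;0,1)-E(\tau;1,1)},
\]
while Proposition~\ref{propE}~(ii), together with the definition of $[A]_2$, gives $E\bigl(\tfrac{a\tau+b}{c\tau+d};r,s\bigr)=(c\tau+d)^{2}\,E(\tau;ar+cs,br+ds)$. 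The weight-$2$ factors $(c\tau+d)^{2}$ therefore cancel in the quotient, and
\[
\Lambda\circ A=\frac{E(\tau;a,b)-E(\tau;a+c,b+d)}{E(\tau;c,d)-E(\tau;a+c,b+d)}.
\]
Hence $\nu(A)$ is the $q$-order of the numerator minus that of the denominator, and I will read off both orders from Proposition~\ref{prop0}.

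For part (i): Proposition~\ref{prop0} shows that, under its hypotheses, $E(\tau;r_1,s_1)-E(\tau;r_2,s_2)$ has leading term $\theta\,q^{\min(\br{r_1},\br{r_2})}$ with $\theta\ne 0$ (the case $\br{r_1}>\br{r_2}$ being reduced to $\br{r_1}\le\br{r_2}$ by a sign change), so its $q$-order is exactly $\min(\br{r_1},\br{r_2})$. Applying this to the numerator and denominator found above yields
\[
\nu(A)=\min(\br a,\br{a+c})-\min(\br c,\br{a+c}),
\]
which depends only on the first column of $A$ modulo $N$, as claimed. The one point requiring care — and the step I expect to be the main, if modest, obstacle — is to check that Proposition~\ref{prop0} really applies, i.e.\ that none of the pairs $(a,b)$, $(c,d)$, $(a+c,b+d)$ is $\equiv(0,0)$ and that no two of them are $\equiv\pm$ one another modulo $N$. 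Each such would-be coincidence forces, after a one-line computation, the congruence $ad-bc\equiv 0\pmod N$; since $\det A=1$ and $N>2$ this is impossible, so all cases are dispatched uniformly.

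Parts (ii) and (iii) then follow formally from the formula in (i). Right multiplication by $T^{i}=\left(\begin{smallmatrix}1&i\\0&1\end{smallmatrix}\right)$ leaves the first column of a matrix unchanged, so $\nu(AT^{i})=\nu(A)$ and $\nu(A'T^{i})=\nu(A')$ for every $i\in\mathbb Z$. For $\{A,A'\}\in\mathfrak S_{1}$, the matrix $A'=\left(\begin{smallmatrix}z&-w\\x&-y\end{smallmatrix}\right)$ chosen for $A=\left(\begin{smallmatrix}x&y\\z&w\end{smallmatrix}\right)$ has first column obtained from that of $A$ by interchanging its two entries, and the formula for $\nu$ is antisymmetric under swapping $\br a\leftrightarrow\br c$ (keeping $\br{a+c}$ fixed); hence $\nu(A')=-\nu(A)$, which proves (ii). For $A\in\mathfrak S_{2}$ with first column $(a,c)\bmod N$ one has $a\equiv c$ or $a\equiv-c\pmod N$: in the first case $\br a=\br c$, so the two minima coincide; in the second $\br a=\br{-c}=\br c$ and $\br{a+c}=\br 0=0$, so $\min(\br a,0)=0=\min(\br c,0)$. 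Either way $\nu(AT^{i})=\nu(A)=0$, which proves (iii).
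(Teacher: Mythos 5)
Your proof is correct and follows essentially the same route as the paper: express $\Lambda\circ A$ as a quotient of differences of the forms $E$ via Proposition~\ref{propE}~(ii), read off the $q$-orders from Proposition~\ref{prop0}, and deduce (ii) and (iii) from the fact that right multiplication by $T^i$ preserves the first column. Your explicit check (via $\det A=1$) that the hypotheses of Proposition~\ref{prop0} are satisfied is a detail the paper leaves implicit, but it does not change the argument.
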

\begin{proof}
Let $A\in \Gamma(1)$ and assume that $\displaystyle A\equiv \left(\begin{smallmatrix}a&b\\c&d\end{smallmatrix}\right) \mod N$. Since by Proposition~\ref{propE} (ii) 
$$
\Lambda\circ A=\frac{E(\tau;a,b)-E(\tau,a+c,b+d)}{E(\tau;c,d)-E(\tau,a+c,b+d)},
$$
Proposition~\ref{prop0} implies (i). Since right multiplication of a matrix with $T^i$ does not change the first column, (ii) and (iii) are immediate results of (i).
\end{proof}
For pairs of integers $(r_1,s_1)$ and $(r_2,s_2)$ such that $(r_1,s_1),(r_2,s_2)\not\equiv (0,0)$ and $~(r_1,s_1)\not\equiv \pm (r_2,s_2) \mod N$, we define 
\begin{equation*}
W(\tau;r_1,s_1,r_2,s_2)=\frac{E(\tau;r_1,s_1)-E(\tau;r_2,s_2)}{E(\tau;r_1,-s_1)-E(\tau;r_2,-s_2)}. 
\end{equation*}
The next lemma is used in the proof of Theorem~\ref{thm1} and is deduced immediately from Proposition~\ref{prop0}.
\begin{lem}\label{lem2}
Assume that $\{r_2\}\geq\{r_1\}$. Then the leading coefficient $c$ of the $q$-expansion of $W(\tau;r_1,s_1,r_2,s_2)$ is given as follows:
\begin{equation*}\label{eq1}
c=\begin{cases}-\zeta^{\mu(r_1)s_1+\mu(r_2)s_2}~&\text{ if } \{r_2\}=\{r_1\}\ne 0,N/2,\\
\zeta^{2\mu(r_1)s_1}~&\text{ if }\{r_2\}>\{r_1\}\ne 0,\\
1&\text{ otherwise}.
\end{cases}
\end{equation*}
\end{lem}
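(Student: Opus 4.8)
The plan is to apply Proposition~\ref{prop0} to the numerator and the denominator of $W(\tau;r_1,s_1,r_2,s_2)$ separately and then take the ratio of their leading terms. The key observation is that replacing $s_i$ by $-s_i$ leaves $\{r_i\}$ and $\mu(r_i)$ unchanged, so the pair $(r_i,-s_i)$ has the same value $\{r_i\}$ as $(r_i,s_i)$ but its associated root of unity becomes $\omega_i^{-1}=\zeta^{-\mu(r_i)s_i}$; moreover the non-degeneracy conditions $(r_i,s_i)\not\equiv(0,0)$ and $(r_1,s_1)\not\equiv\pm(r_2,s_2)\mod N$ are equivalent to the corresponding ones for $(r_i,-s_i)$, so Proposition~\ref{prop0} applies to the denominator as well.

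Concretely, under the standing hypothesis $\{r_1\}\le\{r_2\}$, Proposition~\ref{prop0} yields $E(\tau;r_1,s_1)-E(\tau;r_2,s_2)=\theta\,q^{\{r_1\}}(1+qh(q))$ and $E(\tau;r_1,-s_1)-E(\tau;r_2,-s_2)=\theta'\,q^{\{r_1\}}(1+qh'(q))$ with $h,h'\in O_N[[q]]$ and $\theta,\theta'$ nonzero elements of $K_N$, where $\theta'$ is exactly the expression defining $\theta$ with every $\omega_i$ replaced by $\omega_i^{-1}$. Since the two factors $1+q(\cdots)$ both contribute $1$ at lowest order, $W$ has $q$-order $0$ with leading coefficient $c=\theta/\theta'$. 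It then remains only to evaluate $\theta/\theta'$ in each of the cases listed in Proposition~\ref{prop0}.

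This is a short computation in each case. For $\{r_1\}=\{r_2\}\ne0,N/2$ we get $\theta=\omega_1-\omega_2$ and $\theta'=\omega_1^{-1}-\omega_2^{-1}=(\omega_2-\omega_1)/(\omega_1\omega_2)$, hence $c=-\omega_1\omega_2=-\zeta^{\mu(r_1)s_1+\mu(r_2)s_2}$; for $\{r_1\}<\{r_2\}$ with $\{r_1\}\ne0$ we get $\theta=\omega_1$, $\theta'=\omega_1^{-1}$, hence $c=\omega_1^2=\zeta^{2\mu(r_1)s_1}$. In every remaining case ($\{r_1\}=\{r_2\}=N/2$, or $\{r_1\}=\{r_2\}=0$, or $\{r_1\}<\{r_2\}$ with $\{r_1\}=0$) one checks that the formula for $\theta$ is invariant under the simultaneous substitution $\omega_i\mapsto\omega_i^{-1}$, so $\theta'=\theta$ and $c=1$.

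The only delicate point — and the nearest thing to an obstacle — is verifying that invariance in the three ``otherwise'' cases: e.g. for $\{r_1\}=\{r_2\}=0$ one must confirm that $\frac{(\omega_1^{-1}-\omega_2^{-1})(1-\omega_1^{-1}\omega_2^{-1})}{(1-\omega_1^{-1})^2(1-\omega_2^{-1})^2}$ collapses back to $\frac{(\omega_1-\omega_2)(1-\omega_1\omega_2)}{(1-\omega_1)^2(1-\omega_2)^2}$ after cancelling the powers of $\omega_1$ and $\omega_2$ from numerator and denominator, and similarly for the $N/2$ case, where a sign bookkeeping enters through the factor $-1/(\omega_1\omega_2)$. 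Everything else is just sorting through the case distinction of Proposition~\ref{prop0}.
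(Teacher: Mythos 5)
Your proof is correct and follows exactly the route the paper intends: the paper states only that the lemma ``is deduced immediately from Proposition~\ref{prop0},'' and your argument is precisely that deduction, applying the proposition to numerator and denominator (with $\omega_i\mapsto\omega_i^{-1}$ for the latter) and computing the ratio $\theta/\theta'$ case by case. All the case computations, including the invariance checks in the ``otherwise'' cases, are accurate.
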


Let $F(X,Y)$ be the polynomial such that $F(X,j)=\F(X)$. Let us write $F(X,Y)$ in a polynomial of $K_N[Y][X]$;
\[F(X,Y)=\sum_{i=0}^{d_N}P_i(Y)X^{d_N-i}.\]
\begin{thm}\label{thm1} Let $N>2,\ne 6$. 
\begin{enumerate}
\itemx{i} $(1-\zeta)^{3i}P_i(Y)\in O_N[Y]$.
\itemx{ii} $P_{d_N-i}(Y)=\overline{P_i(Y)}$ for all $i$, where $\overline{P_i(Y)}$ is the complex conjugation of $P_i(Y)$.
\itemx{iii}$P_i(Y)$ has degree smaller than $i/2$. In particular $P_1(Y)$ and $P_2(Y)$ are constants.
\itemx{iv} Let $t_N$ be the number of inequivalent cusps of $\Gamma(N)$ where $\Lambda$ has poles. Then $\deg P_i(Y)\leq\deg P_{Nt_N}(Y)=[A(N):K_N(\Lambda)]$ for any $i$.
\end{enumerate}
\end{thm}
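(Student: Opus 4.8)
\emph{Strategy.} I would handle (i) first, then (iii) and (iv) together (both are really estimates of $q$-orders), and finally (ii), which is the delicate one.

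For (i): the coefficient $P_i(Y)$ is, up to sign, the $i$-th elementary symmetric function $e_i$ of the roots $\Lambda\circ A$ ($A\in\mathfrak{R}$), so by homogeneity $(1-\zeta)^{3i}P_i(j)=\pm\,e_i\!\left((1-\zeta)^3\Lambda\circ A:A\in\mathfrak{R}\right)$. By Theorem~\ref{prop1}~(ii) applied to the basis given by the rows of $A\bmod N$, each $(1-\zeta)^3\Lambda\circ A$ is integral over $O_N[j]$, hence so is this symmetric function; since it also lies in $K_N(j)$ and $O_N[Y]$ is integrally closed (a polynomial ring over the Dedekind domain $O_N$), it lies in $O_N[Y]$.

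For (iii) and (iv) I would translate degrees into $q$-orders: as $j\sim q^{-N}$, a nonzero $P\in K_N[Y]$ satisfies $\operatorname{ord}_q P(j)=-N\deg_Y P$, and with $\nu(A)$ as in Proposition~\ref{prop3} one gets $\operatorname{ord}_q P_i(j)\ge\mu_i:=\min_{|S|=i}\sum_{A\in S}\nu(A)$, the minimum over $i$-element subsets $S\subseteq\mathfrak{R}$. For (iii): for $N>2$ every $\nu(A)$ satisfies $|\nu(A)|\le(N-1)/2$ --- clear from $\{x\}\le(N-1)/2$ when $N$ is odd, and when $N$ is even because $\nu(A)=\pm N/2$ would force one of $a,c$ to be $\equiv0$ and the other $\equiv N/2\bmod N$, which is incompatible with $\det A\equiv1$; hence $\mu_i\ge-i(N-1)/2>-iN/2$ and $\deg_Y P_i\le-\mu_i/N<i/2$, so $P_1,P_2$ are constant. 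For (iv): by Proposition~\ref{prop3}~(ii),(iii) we have $\nu(AT^k)=\nu(A)$, so in $\mathfrak{R}=\{AT^k\}$ each value $\nu(A)$ ($A\in\mathfrak{S}$) occurs exactly $N$ times; therefore $\mathfrak{R}$ has precisely $Nt_N$ indices with $\nu<0$, and the subset realizing $\mu_{Nt_N}$ --- "take all of them" --- is unique, so no leading-term cancellation occurs and $\operatorname{ord}_q P_{Nt_N}(j)=\mu_{Nt_N}=-N\sum(\text{pole orders of }\Lambda)=-N[A(N):K_N(\Lambda)]$, the last step because the poles of $\Lambda$ lie only at cusps, which all have width $N$, so $[A(N):K_N(\Lambda)]$ is the degree of $\Lambda$ as a map to $\mathbb{P}^1$, i.e. the total pole order. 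Since $i\mapsto\mu_i$ attains its minimum at $i=Nt_N$, we get $\deg_Y P_i\le-\mu_i/N\le-\mu_{Nt_N}/N=\deg_Y P_{Nt_N}$ for all $i$.

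For (ii), the key point is that $\mathfrak{F}^{\sigma_{-1}}$ is the "reciprocal" of $\mathfrak{F}$. Complex conjugation of $q$-coefficients is $\sigma_{-1}$, so Theorem~\ref{prop1}~(v) gives $\overline{\Lambda\circ A}=\Lambda_{-1}\circ A_{-1}$; together with $1/\Lambda_{-1}=\Lambda\circ\left(\begin{smallmatrix}0&-1\\1&0\end{smallmatrix}\right)$, which comes from Theorem~\ref{prop1}~(iv) and the identity $1/\Lambda(\tau;Q_1,Q_2)=\Lambda(\tau;Q_2,Q_1)$, this yields $\overline{\Lambda\circ A}=1/(\Lambda\circ\psi(A))$, where (for $A\equiv\left(\begin{smallmatrix}a&b\\c&d\end{smallmatrix}\right)$) the assignment $\psi(A)\equiv\left(\begin{smallmatrix}c&-d\\a&-b\end{smallmatrix}\right)\bmod N$ is an involution of the cosets of $\Gamma(1)$ modulo $\pm\Gamma(N)$. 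Hence $\{\overline{\Lambda\circ A}:A\in\mathfrak{R}\}=\{1/(\Lambda\circ B):B\in\mathfrak{R}\}$ as multisets, so $\mathfrak{F}^{\sigma_{-1}}(X)=\prod_B\!\left(X-1/(\Lambda\circ B)\right)=X^{d_N}\mathfrak{F}(1/X)/P_{d_N}(j)$; comparing coefficients of $X^{d_N-i}$ and using $j^{\sigma_{-1}}=j$ gives $P_{d_N-i}(Y)=P_{d_N}(Y)\,\overline{P_i(Y)}$. Everything thus comes down to $P_{d_N}(Y)=1$. Here $P_{d_N}(j)=(-1)^{d_N}\prod_{A\in\mathfrak{R}}\Lambda\circ A$ is a modular function for $\Gamma(1)$ with no zeros or poles on $\H$, and since $\sum_{A\in\mathfrak{R}}\nu(A)=0$ (Proposition~\ref{prop3}) it has none at $\infty$ either; being a polynomial in $j$ it is therefore a constant, namely $(-1)^{d_N}$ times the product of the leading $q$-coefficients of the $\Lambda\circ A$, and one evaluates this product to $(-1)^{d_N}$ using the explicit leading coefficients of Proposition~\ref{prop0} (equivalently Lemma~\ref{lem2}, applied along the fixed points and $2$-cycles of $\psi$), whence $P_{d_N}=1$ and $P_{d_N-i}=\overline{P_i}$. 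The step I expect to be the main obstacle is exactly this last identification: part~(i) and the reciprocity relation already give cheaply that $P_{d_N}$ is an $S$-unit of $K_N$ (with $S=\{\mathfrak{p}\mid 1-\zeta\}$) having all archimedean absolute values $1$, hence a root of unity, but upgrading "root of unity" to "$=1$" seems to require carrying out the sign and $\zeta$-power bookkeeping in the product of leading $q$-coefficients, which is precisely what Proposition~\ref{prop0} and Lemma~\ref{lem2} are designed for. The remaining arguments are routine.
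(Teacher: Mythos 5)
Your overall architecture coincides with the paper's: (i) via integrality of $(1-\zeta)^3\Lambda\circ A$ from Theorem~\ref{prop1}(ii) together with normality of $O_N[Y]$; (iii) from the bound $|\nu(A)|<N/2$ played against $\operatorname{ord}_q j=-N$ (your verification that $\nu(A)=\pm N/2$ is impossible for even $N$ is correct); (iv) from $\deg P_{Nt_N}=-\frac1N\sum_{\nu(A)<0,\,A\in\mathfrak S}\nu(A)=[A(N):K_N(\Lambda)]$ --- the paper outsources this to Theorem~\ref{thm3} and the identity $\ell_N=-\sum_{\nu(A)<0}\nu(A)$ in Proposition~\ref{prop4}, but your direct ``unique minimizing subset, hence no leading-term cancellation'' argument is the same computation; and (ii) by identifying $F^{\sigma_{-1}}(X,j)$ with $X^{d_N}F(1/X,j)/P_{d_N}(j)$ via $\Lambda_{-1}=(1/\Lambda)\circ S$ and Theorem~\ref{prop1}(v), reducing everything to $P_{d_N}(Y)=1$. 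All of these steps are sound as stated.

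The one genuine gap is the step you yourself flag: you never evaluate the product of leading $q$-coefficients to conclude $P_{d_N}(Y)=1$, and this is not a routine afterthought --- it is where the paper spends most of its proof of this theorem. The bookkeeping is done by grouping the factors over the pairs $\{A,A'\}\in\mathfrak S_1$ (the $2$-cycles of your involution $\psi$) and over $\mathfrak S_2$ (its fixed points), and applying Lemma~\ref{lem2} to the products $\prod_{i=0}^{N-1}W(\tau;\cdot)$: a factor $(-1)^N$ appears exactly when $\{a\}=\{a+c\}$ (resp.\ $\{c\}=\{a+c\}$), the number of matrices producing such a factor is exactly $2\varphi(N)$, so the signs cancel in pairs; moreover the argument for $\mathfrak S_2$ uses $\{a\}\ne\{a+c\}$, which fails for $N=3$, so that case must be settled by the explicit computation of \S 6(I). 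Your observation that part (i) plus the reciprocity relation already force $P_{d_N}$ to be a root of unity is correct (and not in the paper), but, as you concede, it does not upgrade to $P_{d_N}=1$; without the explicit product computation, part (ii) --- and hence the symmetry $P_{d_N-i}=\overline{P_i}$ --- remains unproved.
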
 
\begin{proof}
First, we shall prove that $P_{d_N}(Y)=1$.  Proposition~\ref{prop3} implies that $P_{d_N}(Y)$ has no poles. Therefore, $P_{d_N}(Y)$ is a constant and equals to the product of leading coefficients of $q$-expansions of $\Lambda\circ A~(A\in\mathfrak R)$, since $d_N$ is even. Let us consider a partial product; 
$$\prod_{(A,A')\in\mathfrak S_1}\prod_{i=0}^{N-1}\Lambda\circ(AT^i)\Lambda\circ(A'T^{-i}).$$

Let $(A,A')\in\mathfrak S_1$. If $A\equiv \left(\begin{smallmatrix}a&b\\c&d\end{smallmatrix}\right) \mod N$, then 
\begin{align*}
\Lambda\circ(AT^i)\cdot\Lambda\circ(A'T^{-i})=W(\tau;a,ia+b,a+c,i(a+c)+b+d)\\
\times W(\tau;c,-(ic+d),a+c,-(i(a+c)+b+d)).
\end{align*}
Let $\delta$ and $\delta'$ be the leading coefficient of 
$\prod_{i=0}^{N-1}W(\tau;a,ia+b,a+c,i(a+c)+b+d)
$
and 
$
\prod_{i=0}^{N-1}W(\tau;c,-(ic+d),a+c,-(i(a+c)+b+d))
$
respectively.
Let $\{a\}=\{a+c\}\ne 0,N/2$. Then by Lemma~\ref{lem2},
\begin{align*}
\delta=&(-1)^N\zeta^{\mu(a)\Sigma_i(ai+b)+\mu(a+c)\Sigma_i(a+c)i+b+d}=(-1)^N\zeta^{\Sigma_i(\mu(a)a+\mu(a+c)(a+c))i}\\
&=(-1)^N\zeta^{(\br a+\br{a+c})N(N-1)/2}=(-1)^N.
\end{align*}
 In other cases, we have easily that $\delta=1$. Similarly, $\delta'=(-1)^N $ if $\br{c}=\br{a+c}$ and $\delta'=1$ otherwise. Since $\br{a}=\br{a+c}=\br{c}$ does not hold, the number of $A$ with $\br{a}=\br{a+c}$ or $\br{c}=\br{a+c}$ is equal to $2\varphi(N)$, where $\varphi(x)$ is Euler totient function. Therefore the leading coefficient of the partial product is $1$. If $A\in\mathfrak S_2$, then $c\equiv a,-a\mod N$. Therefore $\br{a}\ne 0,N/2$ and $\br{a+c}=0\text{ or }\br{2a}$, and if $N\ne 3$, then $\br{a}\ne\br{a+c}$. Proposition~\ref{prop0} gives immediately that the leading coefficient of $\prod_{i=0}^{N-1}\Lambda\circ(AT^i)$ is $1$. Hence $P_{d_N}(Y)=1$. For $N=3$, a direct calculation in \S 6 (I) gives that $P_{d_3}(Y)=1$. Next we shall prove the assertion (ii). By Theorem~\ref{prop1}, $\Lambda^{-1}=\Lambda_{-1}\circ S$,where $S=\left(\begin{smallmatrix}0&1\\-1&0\end{smallmatrix}\right)$. Therefore, $X^{d_N}F(1/X,j)=0$ is a minimal equation of $\Lambda_{-1}$. Further by Theorem~\ref{prop1}~(v), since $\Lambda_{-1}=\Lambda^{\sigma_{-1}}$, $F(X,j)^{\sigma_{-1}}=0$ is a minimal equation of $\Lambda_{-1}$. This shows that $X^{d_N}F(1/X,j)=\overline{F(X,j)}$, and $P_{d_N-i}(Y)=\overline{P_i(Y)}$ for all $i$. The assertion (iii) is deduced from the fact that $\nu(A)>-N/2$ for all $A\in \Gamma(1)$ and the $q$-expansion of $j$ has order $-N$. The assertion (i) is a consequence of  Theorem~\ref{prop1} (ii). For (iv), see Theorem \ref{thm3}~(i) and (ii) in the next section.
\end{proof}

\section{Minimal equation of $j$ over $K_N(\Lambda)$}
In this section,  we shall study the polynomial $F(X,Y)$ of $Y$ with coefficients in $K_N[X]$.  Because the following result can be proved by the almost same argument in section 3 of \cite{II}, we outline the proof. Let $X(N)$ be the modular curve associated with $\Gamma(N)$ defined over $K_N$.

\begin{thm}\label{thm3} Let $N>2,\ne 6$. Put 
\begin{equation*}
F(X,Y)=Q_0(X)Y^{\ell_N}+Q_1(X)Y^{\ell_N-1}+\cdots +Q_{\ell_N}(X).
\end{equation*}
\begin{enumerate}
\itemx{i} $F(\Lambda,Y)$ is a minimal equation of $j$ over $K_N(\Lambda)$. The degree $\ell_N$ of $F(X,Y)$ with respect to $Y$ is equal to
$[A(N):K_N(\Lambda)]$.
\itemx{ii}Let $t_N$ be the number of cusps of $X(N)$ where $\Lambda$ has poles. Let $\alpha_k (k=1,\cdots,d_N/N-2t_N)$ be cusps of $X(N)$ where $\Lambda$ has neither poles nor zeros. Then $\Lambda(\alpha_k)\in K_N$ and $Q_0(X)=c(X^{t_N}\prod_k(X-\Lambda(\alpha_k))^N$ for a non-zero constant $c\in K_N$.
\itemx{iii}$Q_{\ell_N}(X)=F(X,0)=c_1H_1(X)^3$,where $H_1(X)=\prod_{k=1}^{d_N/3}(X-\Lambda(\rho_k))$ and $\rho_k$ are points on $X(N)$ lying over $\rho=(1+\sqrt{-3})/2$, and $c_1$ is a non-zero constant. Further $H_1(0)\ne 0$.
\itemx{iv} $F(X,1728)=c_2H_2(X)^2$, where $H_2(X)=\prod_{k=1}^{d_N/2}(X-\Lambda(\tau_i))$ and $\tau_k$ are points on $X(N)$ lying over $i$, and $c_2$ is a non-zero constant. Further $H_2(0)\ne 0$.
\itemx{v} For any complex number $c\ne 0,1728$, $F(X,c)=0$ has no multiple roots.
\end{enumerate}
\end{thm}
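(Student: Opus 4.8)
The plan is to follow the standard dictionary between the modular function field $A(N)$ and the curve $X(N)$, exploiting that $\Lambda$ and $j$ both generate $A(N)$ over $K_N$ and that both have all their zeros and poles supported on cusps (for $\Lambda$) or at a single cusp-orbit structure (for $j$). For (i), I would argue that since $A(N)=K_N(j,\Lambda)=K_N(\Lambda)(j)$ by Theorem \ref{prop1}(iii), the minimal polynomial of $j$ over $K_N(\Lambda)$ has degree $[A(N):K_N(\Lambda)]$; and since $\F(X)=F(X,j)$ is irreducible over $K_N(j)$ of degree $d_N$, the polynomial $F(X,Y)$, viewed in $K_N[X,Y]$, is irreducible, so its factor $F(\Lambda,Y)$ over $K_N(\Lambda)$ is (up to the leading coefficient $Q_0(\Lambda)$) the minimal polynomial of $j$; the equality $\ell_N=[A(N):K_N(\Lambda)]$ follows by comparing degrees, using $d_N = \ell_N\cdot[K_N(\Lambda):K_N(j)]$ and $[A(N):K_N(j)]=d_N$. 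This mirrors section 3 of \cite{II} and is essentially formal.

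For (ii), the leading coefficient $Q_0(X)$ as a polynomial in $Y$ records the polar behaviour of $j$ along the places of $A(N)$, i.e.\ the cusps of $X(N)$: $j$ has a pole of order $N$ at every cusp of $X(N)$ (the $q$-expansion of $j$ has order $-N$ in $q=\exp(2\pi i\tau/N)$). The divisor of $Q_0(\Lambda)$ as a function on $X(N)$ is therefore supported on the cusps, with the zeros coming from cusps where $\Lambda$ is finite and nonzero (there $\Lambda-\Lambda(\alpha_k)$ has a zero whose order matches the pole order $N$ of $j$) and the pole of $\Lambda$ itself at the $t_N$ cusps where $\Lambda$ is infinite contributing the factor $X^{t_N}$; that $\Lambda(\alpha_k)\in K_N$ follows because these cusps are rational over $K_N$ (the cusp values of $\Lambda$ lie in $K_N$ by Theorem \ref{prop1}(i) together with the explicit leading coefficients of Proposition \ref{prop0} / Lemma \ref{lem2}). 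One then checks the multiplicities are exactly $N$ by a local (uniformizer) count at each cusp and reads off $Q_0(X)=c\bigl(X^{t_N}\prod_k(X-\Lambda(\alpha_k))\bigr)^N$. The number of cusps where $\Lambda$ is finite and nonzero is $d_N/N - 2t_N$ because Proposition \ref{prop3}(ii) pairs each cusp where $\Lambda$ has a pole with one where it has a zero.

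For (iii) and (iv) I would use the ramification of the covering $X(N)\to X(1)=\mathbb{P}^1_j$: it is ramified with index $3$ over $j=0=\rho$ and index $2$ over $j=1728=i$ (and unramified elsewhere away from the cusp), with no further ramification since $\Gamma(N)$ has no torsion for $N>2$. Hence $F(X,0)$, which up to a constant equals $\prod_{A\in\R}(\Lambda(\rho_A)-X)$ where the points lie over $j=0$, is a perfect cube $c_1 H_1(X)^3$; similarly $F(X,1728)$ is a perfect square. That $\Lambda(\rho_k)\ne 0$ and $\Lambda(\tau_k)\ne 0$ — i.e.\ $H_1(0),H_2(0)\ne 0$ — is the one point needing a small argument: $\Lambda$ has zeros only at cusps (Theorem \ref{prop1}(i)), and $\rho,i$ are not cusps, so no point over $j=0$ or $j=1728$ can be a zero of $\Lambda$. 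Finally (v) is the statement that $F(X,c)$ has distinct roots for $c\ne 0,1728$: the roots are $\Lambda(P)$ as $P$ ranges over the $d_N$ points of $X(N)$ over $j=c$, which are distinct points (unramified fibre), and $\Lambda$ separates them since $\Lambda$ together with $j$ generates $A(N)$ — more precisely, if $\Lambda(P)=\Lambda(P')$ and $j(P)=j(P')=c$ for $P\ne P'$ then no element of $K_N(j,\Lambda)=A(N)$ separates $P$ from $P'$, contradicting that $A(N)$ is the full function field; alternatively, $\mathrm{disc}_Y F(X,Y)$ is a nonzero polynomial whose roots in $Y$ are confined to $\{0,1728\}$ together with the images of the cusps, and one rules out spurious factors at the cusps using the explicit cusp behaviour.

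I expect the main obstacle to be (ii): pinning down that the multiplicity of each cusp in $Q_0(X)$ is exactly $N$, and not merely a multiple of $N$, requires a careful local computation with the uniformizer at each cusp and the exact pole/zero orders of $\Lambda$ there (supplied by Proposition \ref{prop3}(i) and Proposition \ref{prop0}), together with the bookkeeping that the total degree $N\cdot(d_N/N) = d_N$ matches $\deg_Y F\cdot(\text{pole order of }j)$ after accounting for $t_N$. The remaining parts are comparatively routine once the curve-theoretic framework of \cite{II} is in place.
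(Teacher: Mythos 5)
Your overall strategy --- reduce everything to the ramification of $X(N)\to X(1)$ (indices $N,3,2$ over $i\infty,\rho,i$), the fact that the zeros and poles of $\Lambda$ lie only at cusps, and the pairing of polar cusps with zero cusps coming from Proposition~\ref{prop3}(ii) --- is exactly the paper's, which in turn defers the details to section~3 of \cite{II}. However, two of your steps are wrong as stated. In (ii) you derive the factor $X^{t_N}$ from ``the pole of $\Lambda$ itself at the $t_N$ cusps where $\Lambda$ is infinite.'' That is backwards: a cusp where $\Lambda=\infty$ cannot produce a root of $Q_0(X)$ at any finite $X$; what the $Nt_N$ cosets $AT^i$ with $\nu(A)<0$ do is lower the degree of $Q_0$ from $d_N$ to $d_N-Nt_N$ (compare Theorem~\ref{thm1}(iv)). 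The factor $X^{Nt_N}=(X^{t_N})^N$ comes from the $t_N$ cusps where $\Lambda$ \emph{vanishes} (these exist in equal number $t_N$ precisely by the pairing you invoke). Writing $Q_0(X)=\lim_{q\to 0} j^{-\ell_N}\prod_{A\in\mathfrak R}(X-\Lambda\circ A)$, each coset with $\nu(A)>0$ contributes a factor $X$, each coset with $\nu(A)=0$ contributes $X-\Lambda(\alpha_k)$, and each cusp carries exactly $N$ cosets $AT^i$; the multiplicity-$N$ issue you single out as the main obstacle is just this coset count, not a uniformizer computation (the degree check $N(t_N+(d_N/N-2t_N))=d_N-Nt_N$ and the explicit $N=3$ case $Q_0(X)=(X(X-1)(X+\zeta))^3$ confirm the correct attribution).

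The second problem is (v). The inference ``if $\Lambda(P)=\Lambda(P')$ and $j(P)=j(P')$ for $P\neq P'$ then no element of $A(N)=K_N(j,\Lambda)$ separates $P$ from $P'$'' is not valid: a rational function $R(j,\Lambda)$ is evaluated at a point of the curve through the local ring, not by substituting the values of $j$ and $\Lambda$, and at a node of the plane model $F(X,Y)=0$ the two branches share their $(X,Y)$-coordinates yet are separated by the function field. Generators of the function field separate points only \emph{generically}; assertion (v) is precisely the nontrivial claim that the plane model has no singular points over $Y=c$ for $c\neq 0,1728,\infty$, and birationality alone only confines the singular locus to a finite set. Closing this requires the actual argument of \cite{II} (a genus/singularity count comparing the genus of $X(N)$ with the arithmetic genus of the bidegree-$(d_N,\ell_N)$ model, showing the singularities over $0$, $1728$ and the cusps already exhaust the defect); your discriminant alternative gestures at this but does not supply it. Parts (i), (iii), (iv) are essentially right, modulo the slip in (i) that $[K_N(\Lambda):K_N(j)]$ is not a field-extension degree since $K_N(j)\not\subset K_N(\Lambda)$; the correct statement is simply $\ell_N=\deg_Y F=[A(N):K_N(\Lambda)]$, obtained from the irreducibility of $F$ in $K_N[X,Y]$.
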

\begin{proof}
For (i), see the argument in the latter part of Lemma 3 of \cite{II}. The ramification points of $X(N)/X(1)$ are $i\infty,\rho=(1+\sqrt{-3})/2$ and $i=\sqrt{-1}$ and the ramification index of these points are $N,3$ and $2$ respectively. See 1.6 of \cite{SG}. We know that $j(\rho)=0,j(i)=1728$. We note that zero points of $\Lambda$ on $X(N)$ are only cusps and the number of zero points of $\Lambda$ on $X(N)$ is $t_N$ by Proposition~\ref{prop3}. The remaining assertions are obtained from these facts by using the argument in section 3 of \cite{II}. 
\end{proof}
The integers $\ell_N$ and $t_N$ are expressed in the following sums defined for a positive integer $M$, a positive divisor $L$ of $M$ and a non-negative integer $k$;
\begin{equation}\label{sum}
I_k(L,M)=\sum _t t^k~\text{and }J_k(L,M)=\sum_{u}u^k,
 \end{equation}
 where $t$ runs over all integers such that $0<t<M/2,GCD(t,L)=1$ and $u$ runs over integers such that $0<u<M/2,GCD(u,L)=1$ and $u\modx {-M}3$. 
\begin{prop}\label{prop4} Let $N>2,~\ne 6$. Then
\begin{equation*}
\begin{split}
&\ell_N=I_1(N,N)+2\sum_{0<a<N/3}I_1(GCD(a,N),N-3a)+\delta_1(N),\\
&t_N=I_0(N,N)+2\sum_{0<a<N/3}I_0(GCD(a,N),N-3a)+\delta_0(N),
\end{split}
\end{equation*}
where $a$ runs over all positive integers smaller than $N/3$ and
\begin{equation*}
\begin{split}
&\delta_1(N)=\begin{cases}3(I_1(N/3,N/3)-J_1(N/3,N/3))~~&\text{if }N\modx 03,\\
J_1(N,N) &\text{if } N\not\equiv 0 \mod 3,\end{cases}\\
&\delta_0(N)=\begin{cases}I_0(N/3,N/3)-J_0(N/3,N/3)~~&\text{if }N\modx 03,\\
J_0(N,N) &\text{if } N\not\equiv 0 \mod 3.\end{cases}\\
\end{split}
\end{equation*}
\end{prop}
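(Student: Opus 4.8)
The plan is to express both $\ell_N$ and $t_N$ as a single combinatorial sum over the cusps of $\Gamma(N)$ and then evaluate that sum by stratifying the cusps according to one coordinate. By Theorem~\ref{prop1}(i) the function $\Lambda$ has all its zeros and poles at cusps, and by Theorem~\ref{thm3}(i) the integer $\ell_N=[A(N):K_N(\Lambda)]$ is the degree of $\Lambda\colon X(N)\to\mathbb P^1$. Since every cusp of $\Gamma(N)$ has width $N$, so that $q$ is a local parameter there, Proposition~\ref{prop3} identifies the order of $\Lambda$ at the cusp attached to a pair $(a,c)\in\Sigma$ with $\nu(a,c)=\min(\{a\},\{a+c\})-\min(\{c\},\{a+c\})$. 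Counting the poles of $\Lambda$ and using that $(a,c)\mapsto(c,a)$ is an involution of $\Sigma$ which changes the sign of $\nu$ (Proposition~\ref{prop3}(ii),(iii)), one obtains
\[
\ell_N=\sum_{(a,c)\in\Sigma}\max(\nu(a,c),0),\qquad t_N=\#\{(a,c)\in\Sigma:\nu(a,c)>0\}.
\]
From the formula for $\nu$ one reads off the key elementary fact: if $\{a+c\}\le\{c\}$ then $\nu\le0$, while otherwise $\min(\{c\},\{a+c\})=\{c\}$, so $\nu(a,c)>0$ precisely when $\{c\}$ is strictly the smallest of $\{a\},\{c\},\{a+c\}$, and then $\nu(a,c)=\min(\{a\},\{a+c\})-\{c\}$.

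Next I would stratify $\Sigma$ by $\gamma:=\{c\}$, $0\le\gamma\le\lfloor N/2\rfloor$, and treat $\ell_N$ and $t_N$ uniformly as $\sum\nu^k$ with $k=1$ and $k=0$. The stratum $\gamma=N/2$ is empty, since a maximal value of $\{c\}$ cannot be strictly smallest. The stratum $\gamma=0$ consists of the cusps represented by $(a,0)$ with $a$ a unit; taking the representative with $0<a<N/2$ one has $\nu=a$, so this stratum contributes $I_k(N,N)$. For $0<\gamma<N/2$ each cusp in the stratum has a unique representative with second coordinate $\equiv\gamma\pmod N$, so these cusps are indexed by residues $a\bmod N$ with $GCD(a,GCD(\gamma,N))=1$; the condition $\nu(a,c)>0$ says exactly that $a$ avoids the block of $3\gamma+1$ consecutive residues $\{-2\gamma,\dots,\gamma\}$, which is empty unless $\gamma<N/3$ (and for the few borderline $\gamma$ the corresponding $I_k$-sum vanishes anyway, so the outer sum may be taken over $0<\gamma<N/3$). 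On this admissible set the sign-preserving involution $a\mapsto-(a+\gamma)$ fixes both $\gamma=\{c\}$ and $\nu$, and acts by reflecting the admissible residues about $(N-\gamma)/2$; for a representative $\hat a$ strictly below that point one checks $\{a\}=\hat a$ and $\nu=\hat a-\gamma$, so the substitution $t=\hat a-\gamma$ — which respects the coprimality condition because $GCD(\gamma,N)\mid\gamma$ — shows that the two halves of the stratum together contribute $2I_k(GCD(\gamma,N),N-3\gamma)$. Summing over $0<\gamma<N/3$ produces the middle term of the asserted formulas (the variable $a$ in the statement being our $\gamma$).

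What remains are the fixed points of the involution lying in $\{\nu>0\}$: such a point exists precisely when $N-\gamma$ is even, equals $\hat a=(N-\gamma)/2$ with $\nu=(N-3\gamma)/2>0$, and is an actual cusp point exactly when $GCD\bigl((N-\gamma)/2,GCD(\gamma,N)\bigr)=1$. I would collect these over $\gamma$ via $u=(N-3\gamma)/2$, which forces $u\equiv-N\pmod 3$ and $0<u<N/2$. When $3\nmid N$ the coprimality condition reduces to $GCD(u,N)=1$, and the fixed-point contribution is $\sum u^k=J_k(N,N)$; when $3\mid N$ one writes $u=3v$, $\gamma=N/3-2v$, the condition becomes $GCD(v,N/3)=1$ together with $v\not\equiv-N/3\pmod 3$, and the contribution is $\sum(3v)^k=3^k\bigl(I_k(N/3,N/3)-J_k(N/3,N/3)\bigr)$. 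In both cases this is $\delta_k(N)$ (using $3^0=1$, $3^1=3$), and adding the three contributions gives the stated formulas.

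I expect the main difficulty to be exactly this last bookkeeping: verifying the equivalences of the coprimality conditions under the substitutions $t=\hat a-\gamma$, $u=(N-3\gamma)/2$ and $u=3v$ (in particular that $GCD((N-\gamma)/2,GCD(\gamma,N))=1$ is equivalent to $GCD(u,N)=1$ when $3\nmid N$), together with matching the summation ranges precisely — that $\gamma<N/3$ corresponds to $t<(N-3\gamma)/2$, to $u<N/2$, and to $v<N/6$ — while checking that the parity of $N-3\gamma$ correctly governs whether the fixed point occurs and whether $I_k(\,\cdot\,,N-3\gamma)$ picks up its top term. Everything else is a direct consequence of Propositions~\ref{prop3} and Theorem~\ref{thm3}.
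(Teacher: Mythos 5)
Your proposal is correct and follows essentially the same route as the paper: both reduce the computation to counting, via Proposition~\ref{prop3}, the cusps $(a,c)\in\Sigma$ on which $\nu$ has a fixed sign, stratify by the coordinate that is forced to be strictly smallest, and split each stratum into a doubled $I_k$--sum plus a boundary term yielding $\delta_k(N)$. The only cosmetic differences are that you count zeros ($\nu>0$, stratified by $\{c\}$) where the paper counts poles ($\nu<0$, stratified by $\{a\}$), and that you package the paper's case split on the sign of the second coordinate as the $\nu$-preserving involution $a\mapsto-(a+\gamma)$, whose fixed points produce exactly the paper's endpoint terms $t=(N-3a)/2$.
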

\begin{proof}  By (i) of Theorem~\ref{thm3} ,
$$\ell_N=[A(N):K_N(\Lambda)]=-\sum_{\nu(A)<0,A\in\mathfrak S}\nu(A).$$ 
Let $A\in \mathfrak S$ correspond to $(a,c)\in\Sigma$ with $\nu(A)<0$. Then $\nu(A)=\br{a}-\min(\br{c},\br{a+c})<0$. Assuming that $0\leq a<N/2$ and $-N/2< c\leq N/2$, we shall determine $(a,c)\in\Sigma$ satisfying the condition:
\begin{equation}\label{pole}
a<\min(\br{c},\br{a+c}).
\end{equation}
 For each $a$, to determine $c$, put $k=\br c-a$. Since $a<\br c\leq N/2$, $0<k\leq (N-2a)/2$. If $a=0$, then 
\begin{equation}\label{eq0}
0<k<N/2,~GCD(k,N)=1,~\nu(A)=-k.
\end{equation}
 Let $a>0$. If $-N/2<c<0$, then $c=-(a+k)$. Since $\br{a+c}=k< a+k=\br{c}<N/2$, by \eqref{pole} $a<k<(N-2a)/2$ and $\nu(A)=a-k$. By substituting $t$ for $k-a$, we have
\begin{equation}\label{eqa}
0<t<\frac{N-4a}2,~GCD(a,t,N)=1,~\nu(A)=-t.
\end{equation}
Let $0\leq c\leq N/2$. Then $c=a+k$. 
%If $k\leq (N-4a)/2$, then $\br c<\br{a+c}=2a+k\leq N/2$ and $\nu(A)=a-\br c=-k$. Let $k>(%N-4a)/2$. Then $\br{a+c}=N-(2a+k)$. 
If $k\leq (N-3a)/2$, then $\br c\leq\br{a+c}$ and $\nu(A)=a-\br c=-k$. If $k>(N-3a)/2$, then $\br c>\br{a+c}$. Therefore, by \eqref{pole}, $k<N-3a$ and $\nu(A)=a-\br{a+c}=(3a+k)-N$. Putting them together,
\begin{equation}\label{eqb}
0<k\leq \frac{N-3a}2,~GCD(a,k,N)=1,~\nu(A)=-k,
\end{equation}
\begin{equation}\label{eqd}
\begin{cases}
&\frac{N-3a}2<k\leq \min(N-3a-1,\frac{N-2a}2),GCD(a,k,N)=1,\\
&\nu(A)=3a+k-N.
\end{cases}
\end{equation}
 In \eqref{eqd}, by substituting $t$  for  $N-3a-k$, 
\begin{equation}\label{eqe}
\max(1,\frac{N-4a}2)\leq t<\frac{N-3a}2,~GCD(a,t,N)=1,~\nu(A)=-t.
\end{equation}
Therefore bringing together \eqref{eqa} and \eqref{eqe},
\begin{equation}\label{eqf}
0<t<\frac{N-3a}2,~GCD(a,t,N)=1,\nu(A)=-t.
\end{equation}
 Hence by \eqref{eq0},~\eqref{eqb},~\eqref{eqf},
\begin{equation*}
\ell_N=I_1(N,N)+2\sum_{0<a<N/3}I_1(GCD(a,N),N-3a)+\delta_1(N),
\end{equation*}
where $\delta_1(N)=\sum_a(N-3a)/2$, and $a$ runs over all integers such that $0<a<N/3, a\equiv N \mod 2$ and $GCD(a,(N-a)/2)=1$. Let $t=(N-3a)/2$. If $N\not\equiv 0\mod 3$, then $\delta(N)=\sum_t t$, where $0<t<N/2,GCD(t,N)=1,t\equiv -N \mod 3$. Therefore $\delta(N)=J_1(N,N)$. If $N\equiv 0\mod 3$, putting $t=3s$,then $\delta(N)=3\sum_s s$, where $0<s<N/6,GCD(s,N/3)=1,s\not\equiv -N/3\mod 3$. Therefore $\delta_1(N)=3(I_1(N/3,N/3)-J_1(N/3,N/3))$.
Since $t_N$ is the number of $(a,c)\in \Sigma$ with $\nu(A)<0$, the above argument also gives the result for $t_N$.
\end{proof}

In general, it is not easy to express $\ell_N$ explicitly. However in the case that $N$ is a prime power, we have the following results.
\begin{prop}\label{prop5} Assume that $N$ is a power of a prime number.
\begin{enumerate}
\itemx{i} Let $N=2^m$, where $m$ is an integer greater than $1$. Then
$$t_{N}=\frac{3\cdot2^{2m-3}-2^{m-1}-(-1)^m}3,~\ell_N=\frac{2^{3m-4}-(-1)^m}3. $$
\itemx{ii} Let $N=3^m$, where $m$ is a positive integer. If $m=1$, then $\ell_N=t_N=1$. If $m\geq 2$, then
$$t_N=4\cdot 3^{2m-3}-2\cdot 3^{m-2},~\ell_{N}=2\cdot 3^{3m-4}.$$
\itemx{iii} Let $N=p^m$, where $p$ is a prime number greater than $3$ and $m$ is a positive integer. Then
\begin{align*}
&t_N=\frac{p^{2m}-p^{2m-2}-2p^m+2p^{m-1}}6+\begin{cases}0 &\text{ if }p\modx13,\\
      \frac 13 &\text{ if }p\modx 23,m\text{:odd},\\ -\frac13 &\text{ if }p\modx 23,m\text{:even},
\end{cases}\\
&\ell_N=\frac{p^{3m}-p^{3m-2}}{36}+\begin{cases}\frac{p-1}9 &\text{ if }p\modx13,\\
\frac{p+1}9 &\text{ if }p\modx 23,m\text{:odd},\\
-\frac{p+1}9 &\text{ if }p\modx 23,m\text{:even}.
\end{cases}
\end{align*}
\end{enumerate}
\end{prop}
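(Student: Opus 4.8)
The plan is to specialize the formulas for $\ell_N$ and $t_N$ obtained in Proposition~\ref{prop4} to the case $N = p^m$, where the three auxiliary sums $I_0, I_1, J_0, J_1$ can be evaluated in closed form. The first step is to record the elementary evaluations of $I_k(L,M)$ when $L = p$ or $L = p^j$: here $\gcd(t,L) = 1$ simply means $p \nmid t$, so $I_0(p^j, M) = \#\{0 < t < M/2\} - \#\{0 < t < M/2 : p \mid t\}$ and similarly $I_1(p^j, M)$ is a difference of a sum of consecutive integers and $p$ times a shorter such sum. Since $N = p^m$ and the inner arguments $N - 3a$ are bounded by $N$, the condition $\gcd(a,N) = \gcd(a,p^m)$ is governed entirely by $v_p(a)$, and $\gcd(a, N-3a, N)$ is likewise a power of $p$. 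This lets me split the sum $\sum_{0 < a < N/3}$ according to $v_p(a) = 0, 1, 2, \dots$, and in each block the contribution is a sum of $I_k(p^j, N-3a)$ over $a$ in an arithmetic progression, which telescopes into polynomials in $p$ and $m$.

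Next I would handle the correction terms $\delta_0(N)$ and $\delta_1(N)$. When $p \ne 3$ we are in the case $N \not\equiv 0 \bmod 3$, so $\delta_k(N) = J_k(N,N)$, which counts (or sums) the integers $0 < t < N/2$ with $p \nmid t$ and $t \equiv -N \bmod 3$; this is a standard count over a residue class modulo $3p$ (or a union of two residue classes), and its value depends on $p \bmod 3$ and the parity of $m$ through $N = p^m \bmod 3$ — exactly the case division appearing in parts~(ii) of the Prop.\ with $p > 3$, and part~(i) with $p = 2$. When $p = 3$ we instead use $\delta_k(N) = 3(I_k(N/3,N/3) - J_k(N/3,N/3))$ for $k=1$ and the analogous expression for $k = 0$; here $J_k(N/3, N/3)$ counts integers coprime to $3$ in a residue class mod $3$ inside $(0, N/6)$, which is again elementary. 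The case $p = 3$, $m = 1$ is degenerate ($N = 3$) and is checked directly, matching the value $\ell_N = t_N = 1$ already noted after Theorem~\ref{thm1}.

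The remaining work is bookkeeping: assembling $I_1(N,N) + 2\sum_a I_1(\gcd(a,N), N-3a) + \delta_1(N)$ into the stated closed form, and likewise for $t_N$. The main obstacle I anticipate is not any single hard idea but the careful handling of the floor/parity issues — the sums run over $0 < t < M/2$ with $M = N - 3a$, and whether $M$ is even or odd, and whether $p \mid$ the relevant bound, shifts individual terms by $O(1)$; these $O(1)$ discrepancies are precisely what produces the $\pm\frac13$, $\pm\frac{p\pm1}{9}$, and $(-1)^m$ terms, so they must be tracked rather than discarded. A clean way to organize this is to first compute everything as if all boundary terms behaved generically, obtaining the leading polynomial in $p$, and then to collect the genuinely arithmetic corrections into a single residual term whose value is determined by $N \bmod 6$ together with $v_p$ of the boundary points; verifying that this residual matches the tabulated constants (including the $p=2$ subtleties, where $3$ is invertible mod $N$ and $N/2$ is itself a power of $2$) is the delicate endgame. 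Throughout I would cross-check the final formulas against small cases $N = 4, 8, 9, 5, 7$, which can be computed by brute force from Proposition~\ref{prop3}, to catch sign or parity errors.
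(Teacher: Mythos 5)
Your plan is essentially the paper's proof: the paper likewise specializes Proposition~\ref{prop4} to $N=p^m$, notes that $\mathrm{GCD}(a,k,N)\ne 1$ exactly when $p$ divides both $a$ and $k$ (so the coprime double sum is the free double sum minus $p$ times the same sum with $N$ replaced by $N/p$), and then declares the remaining closed-form evaluation, including the $\delta_k$ and boundary corrections, an ``easy but tedious calculation'' whose details are omitted. Your inclusion--exclusion organized by $v_p(a)$ is the same computation, and your explicit tracking of the $O(1)$ floor/parity corrections and small-case cross-checks is, if anything, a more complete account of the part the paper skips.
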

\begin{proof}
Let $N=p^m$ be a prime power. Since $GCD(a,k,N)\ne 1$ if and only if $p$ divides $a$ and $b$ at the same time, 
\begin{equation*}
\sum_{0<a<N/3}I(a,N)=\sum_{0<a<\frac N3}~\sum_{0<k<\frac{N-3a}2}k-p\left(\sum_{0<a<\frac N{3p}}~~\sum_{0<k<\frac{(N/p)-3a}2}k\right).
\end{equation*}
 The assertions can be obtained from this observation by easy but tedious calculation. We omit details.
\end{proof}
Proposition~\ref{prop4} implies that $\ell_N=1$ only for $N=3,4$. On the other hand, the genus of $X(N)$ is $0$ for $N=1,2,3,4$ and $5$. For $N=3,4$, $\Lambda$ is a generator of $A(N)$ over $K_N$ and $j$ can be expressed in a rational function of $\Lambda$, thus, $j=-Q_1(\Lambda)/Q_0(\Lambda)$. Since $\ell_5=4$, $\Lambda$ is not a generator for $N=5$. 
\begin{rem}
 $A(1) ~(\text{resp}.A(2), A(5))$ is generated by $j~ (\text{resp}.\lambda, X_2)$, where $X_2$ is a product of Klein forms defined in \cite{II}.
\end{rem}
See Appendix for the sums $I_k(L,M)$ and $J_k(L,M)~(k=0,1)$. 

\section{Values of $\Lambda$}
In this section, we summarize the results concerning the values of $\Lambda$.
\begin{thm}\label{thmv} Let $\alpha\in\H$.
\begin{enumerate}
\itemx{i}If $\Lambda(\alpha)$ is algebraic, then $\alpha$ is imaginary quadratic  or transcendental.
\itemx{ii} If $\alpha$ is algebraic but is not imaginary quadratic, then $\Lambda(\alpha)$ is transcendental.
\itemx{iii} Let $\alpha$ be imaginary quadratic. Then $(1-\zeta)^3\Lambda(\alpha)$ and $(1-\zeta)^3/\Lambda(\alpha)$ are algebraic integers. 
\itemx{iv} Let $\alpha$ be an imaginary quadratic and assume that $\mathbb Z[\alpha]$ is the maximal order of the field $K=\mathbb Q(\alpha)$. Then $K(\zeta,j(\alpha),\Lambda(\alpha))$ is the ray class field of $K$ modulo $N$ if $N\ne 6$.
\itemx{v} $\Lambda(\alpha)^{-1}=\overline{\Lambda(\alpha/|\alpha|^2)}$. If the absolute value of $\alpha$ is $1$, then the absolute value of $\Lambda(\alpha)$ is $1$.
\end{enumerate}
\end{thm}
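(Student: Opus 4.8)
\emph{Parts (i), (ii), (iii).} For (i) and (ii) the key point is the identity $F(\Lambda(\tau),j(\tau))=0$ on $\H$ together with the irreducibility of $F(X,Y)$ over $K_N$. The identity holds because the transversal $\mathfrak R$ contains exactly one element $\gamma_0$ of the coset $\pm\Gamma(N)$, and $\Lambda\circ\gamma_0=\Lambda$ since $\Lambda$ is fixed by $\Gamma(N)$ and by $-I$; thus $X-\Lambda$ divides $\F(X)=F(X,j)$. Irreducibility of $F(X,Y)$ in $K_N[X,Y]$ follows from Gauss's lemma, since $F(X,Y)$ is monic in $X$ and $F(X,j)$ is the minimal polynomial of $\Lambda$ over $K_N(j)$. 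Writing $F(X,Y)=\sum_i Q_i(X)Y^{\ell_N-i}$ as in Theorem~\ref{thm3}, irreducibility forces the $Q_i(X)$ to have no common factor, hence $F(c,Y)\not\equiv0$ for every $c\in\mathbb C$. So if $\Lambda(\alpha)$ is algebraic then $F(\Lambda(\alpha),Y)$ is a non-zero polynomial over $\overline{\mathbb Q}$ vanishing at $j(\alpha)$, whence $j(\alpha)$ is algebraic; by Schneider's theorem on singular moduli, if $\alpha$ is also algebraic then $\alpha$ must be imaginary quadratic. This is (i), and (ii) is its contrapositive. For (iii): by Theorem~\ref{prop1}(ii) both $(1-\zeta)^3\Lambda$ and $(1-\zeta)^3\Lambda^{-1}=(1-\zeta)^3\Lambda(\tau;Q_2,Q_1)$ are integral over $O_N[j]$; for imaginary quadratic $\alpha$ the singular modulus $j(\alpha)$ is an algebraic integer and $\Lambda$ has no zero or pole on $\H$, so specializing $\tau\mapsto\alpha$ turns a monic relation over $O_N[j]$ into a monic relation over the ring of algebraic integers $O_N[j(\alpha)]$, and therefore $(1-\zeta)^3\Lambda(\alpha)$ and $(1-\zeta)^3/\Lambda(\alpha)$ are algebraic integers.

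\emph{Part (iv): strategy.} The engine is the main theorem of complex multiplication in Shimura's form: for $\alpha\in\H$ generating an imaginary quadratic field $K$ with $\mathbb Z[\alpha]=O_K$, the field obtained by adjoining to $K$ all values $h(\alpha)$, where $h$ runs over the modular function field of level $N$ and is finite at $\alpha$, is the ray class field of $K$ modulo $N$. Now $A(N)$ --- the modular functions for $\Gamma(N)$ with $q$-expansions in $\mathbb Q(\zeta)$ --- is precisely that modular function field, and $A(N)=K_N(j,\Lambda)$ for $N\ne6$ by Theorem~\ref{prop1}(iii); since $\zeta,j,\Lambda\in A(N)$ are all finite at $\alpha$, the inclusion $K(\zeta,j(\alpha),\Lambda(\alpha))\subseteq(\text{ray class field mod }N)$ is immediate. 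For the reverse inclusion it suffices to show that the residue field of $X(N)$ at the point $P_\alpha$ given by $\tau=\alpha$ --- the field of all those values $h(\alpha)$ --- equals $K_N(j(\alpha),\Lambda(\alpha))$. When $j(\alpha)\notin\{0,1728\}$ this is immediate from Theorem~\ref{thm3}(v): $F(X,j(\alpha))$ is separable, so $\Lambda(\alpha)$ is a simple root, the plane model $F(X,j)=0$ is smooth at $P_\alpha$ and hence coincides there with $X(N)$, and its residue field at $P_\alpha$ is generated by the coordinates $j(\alpha),\Lambda(\alpha)$.

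\emph{Part (iv): the obstacle.} I expect the one genuinely delicate point to be the remaining CM points --- those with $\alpha$ equivalent to $\rho=(1+\sqrt{-3})/2$ or to $i$, i.e. $j(\alpha)\in\{0,1728\}$. There $X(N)\to\mathbb A^1_j$ is ramified and the plane model $F(X,j)=0$ may fail to be normal at $P_\alpha$ --- this is exactly the source of the cube and square structure $F(X,0)=c_1H_1(X)^3$, $F(X,1728)=c_2H_2(X)^2$ of Theorem~\ref{thm3}(iii),(iv) --- so the residue field at $P_\alpha$ could a priori strictly contain $K_N(j(\alpha),\Lambda(\alpha))$. In these cases, however, $K$ is forced to be $\mathbb Q(\sqrt{-3})$, respectively $\mathbb Q(i)$, and one has to check that any such discrepancy is absorbed into $K$, so that $K(\zeta,j(\alpha),\Lambda(\alpha))$ is still the whole ray class field. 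I would settle this either by a direct computation of the local ring $\mathcal O_{X(N),P_\alpha}$ from the defining equation, or --- more robustly --- by invoking Shimura's reciprocity law to show that an automorphism of the ray class field over $K$ fixing $\zeta$, $j(\alpha)$ and $\Lambda(\alpha)$ acts trivially. This local/reciprocity step is where the real work lies; the rest of (iv) is formal once the main theorem of complex multiplication is quoted.

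\emph{Part (v).} This is a direct manipulation of the $q$-expansion $\Lambda(\tau)=\sum_ma_mq^m$ ($a_m\in K_N$, $q=e^{2\pi i\tau/N}$, valid on $\H$). Since $\sigma_{-1}$ is complex conjugation on $K_N$,
\[
\overline{\Lambda(\tau)}=\sum_m\overline{a_m}\,e^{2\pi i m(-\bar\tau)/N}=\Lambda^{\sigma_{-1}}(-\bar\tau)\qquad(\tau\in\H).
\]
On the other hand, by Theorem~\ref{prop1}, $\Lambda^{-1}=\Lambda_{-1}\circ S=\Lambda^{\sigma_{-1}}\circ S$ with $S=\left(\begin{smallmatrix}0&1\\-1&0\end{smallmatrix}\right)$, so $\Lambda(\tau)^{-1}=\Lambda^{\sigma_{-1}}(-1/\tau)$ for all $\tau\in\H$. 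Putting $\tau=\alpha$ and using $\overline{1/\bar\alpha}=1/\alpha$ and $1/\bar\alpha=\alpha/|\alpha|^2$,
\[
\Lambda(\alpha)^{-1}=\Lambda^{\sigma_{-1}}(-1/\alpha)=\overline{\Lambda(1/\bar\alpha)}=\overline{\Lambda(\alpha/|\alpha|^2)},
\]
which is the first claim; if $|\alpha|=1$ then $\alpha/|\alpha|^2=\alpha$, so $\Lambda(\alpha)\overline{\Lambda(\alpha)}=1$, i.e. $|\Lambda(\alpha)|=1$.
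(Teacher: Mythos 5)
Parts (i), (ii), (iii) and (v) are correct and follow essentially the paper's route. The paper disposes of (i) and (ii) in one line via ``$\Lambda(\alpha)$ is algebraic if and only if $j(\alpha)$ is algebraic'' plus Schneider's theorem; your argument through the irreducibility of $F(X,Y)$ and the observation that $F(c,Y)\not\equiv 0$ for every $c$ is precisely the justification that line suppresses, and it is sound. Part (iii) is the paper's argument verbatim in substance: Theorem~\ref{prop1}(ii) applied to $\Lambda$ and to $1/\Lambda=\Lambda(\tau;Q_2,Q_1)$, specialized at $\alpha$ using the integrality of $j(\alpha)$. Part (v) is the paper's computation read in the opposite direction (the paper starts from $\overline{\Lambda(\alpha)}=\Lambda_{-1}(-\overline{\alpha})$, you start from $\Lambda^{-1}=\Lambda^{\sigma_{-1}}\circ S$); both rest on the identity $\overline{\Lambda(\tau)}=\Lambda^{\sigma_{-1}}(-\overline{\tau})$, which you helpfully make explicit via the $q$-expansion.

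Part (iv) is not actually proved in your proposal. Your framework is the right one (the main theorem of complex multiplication identifying the ray class field modulo $N$ with the field generated by values at $\alpha$ of the level-$N$ modular function field, together with $A(N)=K_N(j,\Lambda)$ from Theorem~\ref{prop1}(iii)), and your treatment of the case $j(\alpha)\neq 0,1728$ via the separability statement of Theorem~\ref{thm3}(v) is fine. But for $K=\mathbb Q(\sqrt{-1})$ and $K=\mathbb Q(\sqrt{-3})$ you only describe how you \emph{would} close the argument (a local computation at $P_\alpha$, or Shimura reciprocity), and you correctly flag this as ``where the real work lies''---so those two fields remain unproved. For comparison, the paper does not reprove (iv) at all: it cites Theorem 4.5 of \cite{IN2}, which is where this Galois-theoretic step (in effect the reciprocity-law argument you sketch) is carried out. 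As a self-contained proof, then, your (iv) has a genuine gap exactly at the two exceptional fields; everything else matches the paper.
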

\begin{proof}
Since $\Lambda(\alpha)$ is algebraic if and only if $j(\alpha)$ is algebraic, the assertions (i) and (ii) are deduced from Schneider's result on the values of $j$ \cite{SC}. The assertion (iii) follows from Theorem~\ref{prop1}~(ii) and the fact that $j(\alpha)$ is an algebraic integer (\cite{Cox},Theorem 10.23, \cite{SG}, Theorem 4.14).  The assertion (iv) is the result in Theorem 4.5 \cite{IN2}. Since $\Lambda_{-1}=(1/\Lambda)\circ S$,
\begin{equation*}
\overline{\Lambda(\alpha)}=\Lambda_{-1}(-\overline{\alpha})=(1/\Lambda)(S(-\overline{\alpha}))=(1/\Lambda)(1/\overline{\alpha})=1/\Lambda(\alpha/|\alpha|^2).
\end{equation*}
Hence if $|\alpha|=1$, then $\Lambda(\alpha)\overline{\Lambda(\alpha)}=1$. This shows (vi).
\end{proof}
\begin{cor}
 Let $\alpha$ be imaginary quadratic. If $N$ is not a prime power, then $\Lambda(\alpha)$ is a unit. If $N$ is a power of a prime $p$, then up to a sign the norm $N_{\mathbb Q(\Lambda(\alpha))/\mathbb Q}(\Lambda(\alpha))$ is a power of $p$.
\end{cor}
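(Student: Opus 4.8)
The plan is to read the statement off from Theorem~\ref{thmv}~(iii) together with the elementary arithmetic of $1-\zeta$. Recall the classical fact, already invoked in the proof of Theorem~\ref{prop1}, that the Galois conjugates of $1-\zeta$ over $\mathbb Q$ are the elements $1-\zeta^a$ with $GCD(a,N)=1$, so that $N_{K_N/\mathbb Q}(1-\zeta)=\Phi_N(1)$; this equals $1$ when $N$ has at least two distinct prime divisors and equals $p$ when $N=p^m$ is a power of a prime $p$. Consequently $1-\zeta$ is a unit of $O_N$ in the first case, while in the second case the ideal generated by $1-\zeta$ in the ring of integers of any number field containing $\zeta$ is supported entirely on primes lying above $p$.

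If $N$ is not a prime power, I would argue as follows. By Theorem~\ref{thmv}~(iii) both $(1-\zeta)^3\Lambda(\alpha)$ and $(1-\zeta)^3/\Lambda(\alpha)$ are algebraic integers; since $1-\zeta$ is an algebraic unit, $\Lambda(\alpha)$ and $\Lambda(\alpha)^{-1}$ are themselves algebraic integers, hence $\Lambda(\alpha)$ is a unit and $N_{\mathbb Q(\Lambda(\alpha))/\mathbb Q}(\Lambda(\alpha))=\pm1$.

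Now suppose $N=p^m$. I would fix a number field $L$ containing $\zeta$ and $\Lambda(\alpha)$ (for instance the ray class field furnished by Theorem~\ref{thmv}~(iv)), write the principal fractional ideal $(\Lambda(\alpha))O_L=\mathfrak b\,\mathfrak c^{-1}$ with $\mathfrak b,\mathfrak c$ coprime integral ideals, and inspect its support. From $(1-\zeta)^3\Lambda(\alpha)\in O_L$ one gets $\mathfrak c\mid\bigl((1-\zeta)^3\bigr)O_L$, and from $(1-\zeta)^3/\Lambda(\alpha)\in O_L$ one gets $\mathfrak b\mid\bigl((1-\zeta)^3\bigr)O_L$; by the first paragraph both $\mathfrak b$ and $\mathfrak c$ are then products of primes above $p$. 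Taking ideal norms gives $N_{L/\mathbb Q}(\Lambda(\alpha))=\pm p^{a}$ for some $a\in\mathbb Z$. It remains to descend to $\mathbb Q(\Lambda(\alpha))$: with $e=[L:\mathbb Q(\Lambda(\alpha))]$ we have $N_{L/\mathbb Q}(\Lambda(\alpha))=N_{\mathbb Q(\Lambda(\alpha))/\mathbb Q}(\Lambda(\alpha))^{e}$, so the rational number $N_{\mathbb Q(\Lambda(\alpha))/\mathbb Q}(\Lambda(\alpha))$ has absolute value $p^{a/e}$, which being rational forces $a/e\in\mathbb Z$; hence $N_{\mathbb Q(\Lambda(\alpha))/\mathbb Q}(\Lambda(\alpha))=\pm p^{a/e}$ is, up to sign, a power of $p$.

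I do not expect any real difficulty here: the whole argument is bookkeeping with the prime(s) above $p$ and the multiplicativity of the norm. The only points needing a moment's care are the reduction of the $L$-norm to the $\mathbb Q(\Lambda(\alpha))$-norm and the remark that a positive rational whose $e$-th power equals $p^{a}$ must itself be $p^{a/e}$ with integral exponent; one could equally avoid introducing $L$ by running the divisibility argument directly in $\mathbb Q(\zeta,\Lambda(\alpha))$, or by appealing to Theorem~\ref{prop1}~(ii) to see that $\Lambda(\alpha)$ is a unit of the integral closure of $O_{N,1-\zeta}[\,j(\alpha)]$, so that $(\Lambda(\alpha))$ is automatically a unit away from $p$.
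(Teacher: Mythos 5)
Your proof is correct and follows the same route as the paper, whose entire proof is the one-line remark that the corollary is a direct consequence of Theorem~\ref{thmv}~(iii); you have simply supplied the standard bookkeeping (the value of $\Phi_N(1)$, the support of the ideal $(1-\zeta)$, and the norm descent) that the author leaves implicit. The details you fill in are all sound, and they are consistent with the paper's examples $N_{}\bigl(\Lambda(\sqrt{-2})\bigr)=1/2$ for $N=4$ and $3^{-3}$ for $N=3$, which show that negative exponents of $p$ really do occur.
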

\begin{proof}
This is a direct result of Theorem \ref{thmv} (iii).
\end{proof}
\begin{rem} If $N$ is a prime power, then the value of $\Lambda$ is not necessarily integral at an imaginary quadratic number. For example, for $N=3$, $\Lambda(\frac{1+\sqrt{-11}}2)$ has norm $3^{-3}$ and for $N=4$, $\Lambda(\sqrt{-2})$ has norm $1/2$. See \rm{Example (I), (II)} in the last section.
 \end{rem}
\begin{prop}\label{propdeg} Let $K$ be the imaginary quadratic number field of discriminant $D_K$ and $H$ the Hilbert class field of $K$. Let $N=\prod p_i^{e_i}$ be the prime decomposition of $N$. Put $K_i=\mathbb Q(\zeta_{p_i^{e_i}})$. Assume that $K\ne \mathbb Q(\sqrt{-1}),~\mathbb Q(\sqrt{-3})$.
\begin{enumerate}
\itemx i If $K$ is contained in one of $K_i$, then 
$$[\R_N:H(\zeta)]=N\prod_i\left(1-\left(\frac{D_K}{p_i}\right)p_i^{-1}\right).$$
\itemx {ii} If $K$ is not contained in any $K_i$, then 
\begin{equation*}
[\R_N:H(\zeta)]=\begin{cases}2^sN\prod_i\left(1-\left(\frac{D_K}{p_i}\right)p_i^{-1}\right)&\text{  if  }K\subset K_N,\\
2^{s-1}N\prod_i\left(1-\left(\frac{D_K}{p_i}\right)p_i^{-1}\right)&\text{  otherwise}.
\end{cases}
\end{equation*}
\end{enumerate}
Here the integer $s$ is defined as follows. Denoting by $r$  the number of odd prime factors of $GCD(D_K,N)$,
$$
s=\begin{cases} r+1 &\text{ if }N\modx 48, D_K\modx 48,\\
                r+1&\text{ if }N\modx 08,D_K\modx 02,\\
                r &\text{ otherwise.}
\end{cases}
$$
\end{prop}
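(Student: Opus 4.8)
The plan is to translate the statement into class field theory over $K$ and reduce it to a local computation together with genus theory. First I would invoke Theorem~\ref{thmv}(iv): $\R_N$ is the ray class field of $K$ modulo $N$, so the Artin map identifies $\mathrm{Gal}(\R_N/K)$ with the ray class group of $K$ modulo $N$, and the Hilbert class field $H$ with the kernel of the canonical surjection onto the ideal class group of $K$. That kernel is $(O_K/NO_K)^\times$ modulo the image of $O_K^\times$, where $O_K$ denotes the ring of integers of $K$; since $K\ne\mathbb Q(\sqrt{-1}),\mathbb Q(\sqrt{-3})$ one has $O_K^\times=\{\pm1\}$, whose image has order $2$ because $-1\nmodx1N$. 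Hence $[\R_N:H]=\tfrac12|(O_K/NO_K)^\times|$. Moreover $\mathbb Q(\zeta)=K_N$ has conductor dividing $N$ over $\mathbb Q$, so $K(\zeta)/K$ has conductor dividing $NO_K$ and $K_N\subseteq\R_N$; thus $H\subseteq H(\zeta)\subseteq\R_N$, and as $K_N/\mathbb Q$ is Galois,
\[
[\R_N:H(\zeta)]=\frac{[\R_N:H]}{[H(\zeta):H]}=\frac{|(O_K/NO_K)^\times|}{2\,[K_N:K_N\cap H]}.
\]

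I would compute the numerator by the Chinese remainder theorem, $|(O_K/NO_K)^\times|=\prod_i|(O_K/p_i^{e_i}O_K)^\times|$, together with the local identity $|(O_K/p^eO_K)^\times|=p^{2e}(1-p^{-1})\left(1-\left(\frac{D_K}{p}\right)p^{-1}\right)$, which one obtains by separating the three cases $\left(\frac{D_K}{p}\right)=1,-1,0$ ($p$ split, inert, ramified) and which holds at $p=2$ as well. This yields $|(O_K/NO_K)^\times|=N\varphi(N)\prod_i\left(1-\left(\frac{D_K}{p_i}\right)p_i^{-1}\right)$. For the denominator I would note that $K_N\cap H$, being a subfield of $K_N$, is abelian over $\mathbb Q$, and being contained in $H$ it therefore lies in the genus field $H_0$ of $K$ (the maximal unramified extension of $K$ that is abelian over $\mathbb Q$), so $K_N\cap H=K_N\cap H_0$. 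Writing $D_K=d_1\cdots d_t$ as a product of prime discriminants, $H_0=\mathbb Q(\sqrt{d_1},\dots,\sqrt{d_t})$, and $\sqrt{d_i}\in\mathbb Q(\zeta_N)$ exactly when the conductor $|d_i|$ of $\mathbb Q(\sqrt{d_i})$ divides $N$; hence $K_N\cap H=\mathbb Q(\sqrt{d_i}:|d_i|\mid N)$, and $[K_N\cap H:\mathbb Q]$ is $2$ to the power of the number of such $i$. The odd $d_i$ with $|d_i|\mid N$ correspond to the odd prime divisors of $GCD(D_K,N)$ — the integer $r$ — while the prime discriminant at $2$, which occurs iff $2\mid D_K$ and then equals one of $-4,\pm8$, adds one more iff $4\mid N$, resp.\ $8\mid N$; these are precisely the two alternatives in the definition of $s$.

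It then remains to substitute $[K_N:K_N\cap H]$ into the displayed identity and to split into the stated cases: according as $\sqrt{D_K}=\sqrt{d_1\cdots d_t}$ lies in $K_N$ (equivalently $K\subseteq K_N$) or not, plus the degenerate case $K\subseteq K_i$ for some $i$, where $D_K$ is a prime power and $H_0=K$; this produces the three formulas of (i) and (ii). The hard part will be the $2$-adic bookkeeping: matching the condition $\sqrt{d_i}\in\mathbb Q(\zeta_N)$ for a prime discriminant $d_i\in\{-4,\pm8\}$ with the conditions $N\modx48,\ D_K\modx48$ and $N\modx08,\ D_K\modx02$ that define $s$; handling the identity $\mathbb Q(\zeta_N)=\mathbb Q(\zeta_{N/2})$ when $N\modx24$; and isolating the contribution of a ramified prime $2$ to $|(O_K/NO_K)^\times|$. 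The odd primes and the case $K\subseteq K_i$ should then be routine.
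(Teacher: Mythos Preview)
Your approach is essentially the paper's: both reduce the question to genus theory, the paper by computing $[H\cap K(\zeta):K]$ and you by computing $[K_N\cap H:\mathbb Q]$. Your intermediate steps are all correct --- in particular $[\R_N:H(\zeta)]=|(O_K/NO_K)^\times|/\bigl(2\,[K_N:K_N\cap H]\bigr)$ and $K_N\cap H=\mathbb Q(\sqrt{d_i}:|d_i|\mid N)$, whence $[K_N\cap H:\mathbb Q]=2^{s}$ with $s$ exactly as defined in the statement.

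The gap is in your last paragraph, where you assert that substituting and splitting into cases ``produces the three formulas of (i) and (ii)''. It does not. Your displayed identity yields uniformly
\[
[\R_N:H(\zeta)]\;=\;\frac{N\varphi(N)\prod_i\bigl(1-(\tfrac{D_K}{p_i})p_i^{-1}\bigr)}{2\cdot\varphi(N)/2^{s}}\;=\;2^{\,s-1}N\prod_i\Bigl(1-\Bigl(\tfrac{D_K}{p_i}\Bigr)p_i^{-1}\Bigr),
\]
which matches (i) (there $s=1$) and the second line of (ii), but \emph{not} the first line of (ii), which asserts $2^{s}N\prod_i(\cdots)$ when $K\subset K_N$. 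A concrete test: take $N=15$, $K=\mathbb Q(\sqrt{-15})$. Then $K\subset K_{15}$, $K\not\subset K_i$, $r=s=2$; the stated formula gives $2^2\cdot 15=60$, but in fact $H=H_0=\mathbb Q(\sqrt{-3},\sqrt5)\subset\mathbb Q(\zeta_{15})$, so $H(\zeta)=\mathbb Q(\zeta_{15})$ and $[\R_{15}:H(\zeta)]=30$. The paper's own argument slips at the same point: it asserts $[H\cap K(\zeta):K]=2^{s}$, implicitly treating the $s$ quadratic generators $\sqrt{d_i}$ as independent over $K$, whereas when $K\subset K_N$ all $|d_i|$ divide $N$ and the product relation $\prod_i\sqrt{d_i}=\pm\sqrt{D_K}\in K$ drops the degree to $2^{s-1}$. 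So your method is sound; what fails is the claim that it reproduces the stated exponent in the subcase $K\subset K_N$ of (ii).
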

\begin{proof} Let $h_K$ be the class number of $K$. If $K\subset K_i$, then $H\cap K(\zeta)$=K. Therefore $[H(\zeta):K]=\varphi(N)h_K/2$. Let $K\not\subset K_i$ for  any $i$. Let $L$ be the composite field of quadratic extensions $K\left(\sqrt{(-1)^{\frac{p-1}2}p}\right)$ for all odd prime factors $p$ of $GCD(D_K,N)$. By using elementary algebraic number theory, in the cases that $N\modx 48,~D_K\modx 48$ and that $N\modx 08,~D_K\modx 04$, $H\cap K(\zeta)=L(\sqrt{m})$, where $m$ is an integer suitably chosen from $\{-2,-1,2\}$. In other cases, $H\cap K(\zeta)=L$. Therefore  $[H\cap K(\zeta):K]=2^s$. This shows that $[H(\zeta):K]= \varphi(N)h_K/2^{s+1}$ or $\varphi(N)h_K/2^{s}$ according to $K\subset \mathbb Q(\zeta)$ or not. Since 
$$[\R_N:K]=\frac{h_k\varphi(N)}2N\prod_{p|N}\left(1-\left(\frac{D_K}p \right)p^{-1}\right),
$$ we have our assertion.
\end{proof}
\begin{cor} Let $\alpha$ be an imaginary quadratic number and assume that $\mathbb Z[\alpha]$ is the maximal order of the field $K=\mathbb Q(\alpha)$. If $K\ne \mathbb Q(\sqrt{-1}),~\mathbb Q(\sqrt{-3})$, then the degree of a minimal equation of $\Lambda(\alpha)$ over $K(j(\alpha),\zeta)$ is equal to $[\R_N:H(\zeta)]$ and it is given in Proposition~\ref{propdeg}.
\end{cor}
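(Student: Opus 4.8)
The plan is to reduce the corollary to the combination of Theorem~\ref{thmv}(iv) and Proposition~\ref{propdeg}; almost all of the substance is already contained in those two results, so the remaining task is just to identify the fields in play correctly.

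First I would recall the classical fact from the theory of complex multiplication that, since $\mathbb Z[\alpha]$ is the maximal order $O_K$ of $K=\mathbb Q(\alpha)$, the value $j(\alpha)$ generates the Hilbert class field over $K$, i.e.\ $K(j(\alpha))=H$ (see \cite{Cox}, Theorem 11.1, and \cite{SG}). Hence $K(j(\alpha),\zeta)=H(\zeta)$, the compositum of $H$ with $K(\zeta)$. By Theorem~\ref{thmv}(iii) the number $\Lambda(\alpha)$ is algebraic, so it has a minimal equation over $K(j(\alpha),\zeta)=H(\zeta)$, and the degree of that equation is $[H(\zeta,\Lambda(\alpha)):H(\zeta)]$. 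Now Theorem~\ref{thmv}(iv) (for $N\ne 6$) identifies $K(\zeta,j(\alpha),\Lambda(\alpha))$ with the ray class field $\R_N$ of $K$ modulo $N$; rewriting the left-hand side as $H(\zeta,\Lambda(\alpha))$ gives $H(\zeta,\Lambda(\alpha))=\R_N$. Since $K(\zeta_N)$ is abelian over $K$ of conductor dividing $N$ we have $\zeta\in\R_N$ and $H\subseteq\R_N$, so $H(\zeta)\subseteq\R_N$; as $\R_N/K$ is abelian, $\R_N/H(\zeta)$ is Galois, and therefore the degree of the minimal equation of the generator $\Lambda(\alpha)$ of $\R_N$ over $H(\zeta)$ equals the index $[\R_N:H(\zeta)]$. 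This proves the first assertion, and the explicit value is then exactly what Proposition~\ref{propdeg} computes.

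There is essentially no serious obstacle here: the corollary is a formal consequence of results already established. The two points that need a little care are (a) the appeal to complex multiplication to replace $K(j(\alpha))$ by $H$, which genuinely uses the maximal-order hypothesis $\mathbb Z[\alpha]=O_K$, and (b) the hypothesis $K\ne\mathbb Q(\sqrt{-1}),\mathbb Q(\sqrt{-3})$, which is inherited from Proposition~\ref{propdeg}; it is what guarantees $O_K^\times=\{\pm1\}$ and hence validity of the degree formula $[\R_N:K]=\tfrac{h_K\varphi(N)}2\,N\prod_{p\mid N}\bigl(1-\bigl(\tfrac{D_K}{p}\bigr)p^{-1}\bigr)$ used in deriving the index $[\R_N:H(\zeta)]$ there. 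Once the fields are matched up as above, the statement follows, and the numerical expression is read off directly from Proposition~\ref{propdeg}.
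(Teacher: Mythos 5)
Your argument is correct and is exactly the intended derivation: the paper states this corollary without proof, as an immediate consequence of Theorem~\ref{thmv}(iv) (which identifies $K(\zeta,j(\alpha),\Lambda(\alpha))$ with $\R_N$), the CM fact $K(j(\alpha))=H$, and the index computation of Proposition~\ref{propdeg}. The only cosmetic remark is that the appeal to $\R_N/H(\zeta)$ being Galois is unnecessary -- the degree of the minimal polynomial of a generator equals the extension degree for any finite extension -- and one should note that the hypothesis $N\ne 6$ from Theorem~\ref{thmv}(iv) is tacitly in force.
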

\section{Examples in the cases $N=3$ and $4$}
Let $\eta(\tau)$ be the Dedekind eta function. Let $g_N(\tau)=\eta(\tau)/\eta(N\tau)$. By \cite{newman}, $g_N(\tau)^{\frac{24}{N-1}}$ is a generator of the modular function field of the Hecke group $\Gamma_0(N)$ for $N=3,4$.  Let $X_0(N)$ be the modular curve associated with $\Gamma_0(N)$. In this section, for $N=3,4$, we give some equations among $j,\lambda,g_N$ and $\Lambda$, and compute values of $\Lambda$ for some imaginary quadratic integers $\alpha$ such that $\mathbb Z[\alpha]$ is the maximal order of class number one. These values $\Lambda(\alpha)$ can be obtained from comparing their approximate values with solutions of $F(X,j(\alpha))=0$. The values of $j$ for such $\alpha$ are listed in \cite{Cox} \S 12 C. \newline

(I) the case $N=3$. We have $d_3=12$ and $\zeta=\rho-1$.
\begin{equation*}
j=-3^4\sqrt{-3}\left(\frac{(\Lambda+\zeta-1)(\Lambda+(\zeta-1)/3)(\Lambda+\zeta+1)(\Lambda-\zeta-1)}{\Lambda(\Lambda-1)(\Lambda+\zeta)}\right)^3.
\end{equation*}
\begin{align*}
 F(X,1728)=(X-i+\zeta)^2&(X+i+\zeta)^2(X-1+i\zeta)^2(X-1-i\zeta)^2\\
&\times (X-i-i\zeta)^2(X+i+i\zeta)^2.
\end{align*}
Since $g_3^{12}\in A(3)$, $g_3^{12}$ is a rational function of $\Lambda$, thus,
$$g_3^{12}=81(1-\zeta)\frac{(\Lambda-1)(\Lambda+\zeta)}{\Lambda^3}.$$
Since $X(3)$ is totally ramified at the point $\rho/(\rho+1)$ of $X_0(3)$ of ramification index $3$, this gives the following equation;
$$(\Lambda g_3^4)^3+(3\Lambda)^3=(3(\Lambda+\zeta-1))^3.
$$
The solutions of the equation $X^3+Y^3=Z^3$ are given  by using modular functions $\Lambda$ and $g_3$.
The values of $\Lambda$ at $\alpha=i,\rho$ and $\sqrt{-2}$ are;
$$\Lambda(i)=i\rho,~\Lambda(\rho)=-\rho,~\Lambda(\sqrt{-2})=(\sqrt{-3}-\sqrt{-2})\rho.$$
 We remark that the ray class field of $\mathbb Q(\sqrt{-m})$ of conductor $3$ is a quadratic extension of $\mathbb Q(\sqrt{-m})$ if $m\equiv -1 \mod 3$ and it is a cyclic extension of degree $4$ otherwise. The values $v(m)$ of $\Lambda$ at $\alpha=(1+\sqrt{-m})/2$, where $m=3,7,11,19,43,67$ and $163$, are as follows. \newline
The case $m\equiv -1 \mod 3$.
$$v(11)=\sqrt{-3}(-1+2\sqrt{-11}-3\sqrt{-3})/18.$$
The case $m\equiv 1 \mod 3$.
$$v(m)=\frac{1+\Omega-\beta\sqrt{3\sqrt{-3}\Omega}}2,$$
where
$$
(\beta,\Omega)=\begin{cases}(1,(\sqrt{-3}-\sqrt{-7})/2)&\text{ if }m=7,\\
(2,5\sqrt{-3}-2\sqrt{-19})&\text{ if }m=19,\\
(6,53\sqrt{-3}-14\sqrt{-43})&\text{ if }m=43,\\
(14,293\sqrt{-3}-62\sqrt{-67})&\text{ if }m=67,\\
(154,35573\sqrt{-3}-4826\sqrt{-163})&\text{ if }m=163.
\end{cases}
$$
Those values of $\Lambda$ are units except $v(11)$. The norm of $v(11)$ is $3^{-3}$.\newline
(II) the case $N=4$. We have $d_4=24$.
{\small
\begin{equation*}
j=-2^6\frac{((\Lambda^2-(1-2i)\Lambda-i)(\Lambda^2-(2-i)\Lambda-i)(\Lambda^2-\Lambda+1)(\Lambda^2+i\Lambda-1))^3}{(\Lambda(\Lambda+i)(\Lambda-1)(\Lambda-1+i)(\Lambda-\frac{1-i}2))^4}.
\end{equation*}
}
\begin{align*}
F(X,&1728)=(X^2-2(1-i)X-i)^2(X^2+i)^2(X^2-(1-i)X-\frac{1+i}2)^2\\
&\times(X^2-(1-i)X+\frac{1-i}2)^2(X^2+2iX-1-i)^2(X^2-2X+1-i)^2.
\end{align*} 
Since $\lambda,g_4^8\in A(4)$, $\lambda$ and $g_4^8$ are rational functions of $\Lambda$, thus,
$$\lambda=2i\left(\frac{\Lambda-(1-i)/2}{\Lambda(\Lambda-1+i)}\right)^2,$$
$$g_4^8=-2^6(1-i)\frac{(\Lambda+i)(\Lambda-1)(\Lambda+(i-1)/2)}{\Lambda^4}.$$
Since $X(4)$ is totally ramified at the cusp $1/2$ of $X_0(4)$ of ramification index $4$, the following equation is deduced;
$$(\Lambda g_4^2)^4+(2\Lambda)^4=(2(\Lambda+1-i))^4.
$$
The values $\Lambda(\alpha)$ at $\alpha=i,\rho,\sqrt{-2}$ and $(1+\sqrt{-7})/2$ are;
\begin{align*}
&\Lambda(i)=(i-1)/\sqrt{-2},~\Lambda(\rho)=i(\rho-1),\\
&\Lambda(\sqrt{-2})=(1-i)\left(1-\sqrt{1+\sqrt{2}}\right)/2,\\
&\Lambda((1+\sqrt{-7})/2)=(1-3i+(1+i)\sqrt{-7})/2.
\end{align*}
For $m=11,19,43,67$ and $163$, the ray class field of $\mathbb Q(\sqrt{-m})$ of conductor $4$ is an abelian extension of degree $3$  over $\mathbb Q(\sqrt{-m},i)$ and the value $\Lambda((1+\sqrt{-m})/2)$ is a root of the following equation $EQ(m)$ of degree $3$. \\

The case $m\equiv 11 \mod 16$.
{\small
\begin{align*}
&EQ(11):\\
&X^3-\frac{3+2i-\sqrt{-11}}2X^2+\frac{7+2i+(2i-1)\sqrt{-11})}2X\\
&\phantom{aaaaaaaaaaaaaaaaaaaaaaaaaaaaaaaaaa}-\frac{3(1-i)+(1+i)\sqrt{-11}}2=0,\\
&EQ(43):\\
&X^3-\frac{3+58i-9\sqrt{-43}}2X^2+\frac{119+58i+9(2i-1)\sqrt{-43})}2X\\
&\phantom{aaaaaaaaaaaaaaaaaaaaaaaaaaaaaaaaaa}-\frac{59(1-i)+9(1+i)\sqrt{-43}}2=0
\end{align*}
}
The case $m\equiv 3 \mod 16$.\\

$EQ(m):X^3+\Omega iX^2+\overline{\Omega}X+i=0$,\\

where
$$
\Omega=\begin{cases}\frac{(3-8i-3\sqrt{-19})}2 &\text{ if }m=19,\\
\frac{(3-216i-27\sqrt{-67})}2 &\text{ if } m=67,\\
\frac{(3-8000i-627\sqrt{-163})}2 &\text{ if }m=163.
\end{cases}
$$
Those values are units except $\Lambda(\sqrt{-2})$ and $\Lambda(\frac{1+\sqrt{-7}}2)$. The norm of $\Lambda(\sqrt{-2})$ (resp. $\Lambda(\frac{1+\sqrt{-7}}2)$) is $1/2$ (resp. $2^3$).
\appendix
\section{Sums $I_k(L,M)$ and $J_k(L,M)$}
Let $M$ be a positive integer and $L$ be a positive divisor of $M$. Let $I_k(L,M)$ and $J_k(L,M)$ be the sums defined by \eqref{sum}. We shall compute these sums for $k=0,1$.   For a positive integer $n$, we denote by $n^*$ the product of all prime divisors of $n$. Put
$S_k(n)=\sum_{i=1}^ni^k$. If $L^*>2$, then by a routine argument, we have
\begin{equation}\label{sum1}
I_k(L,M)=\sum_{d|L^*}\bmu(d)d^kS_k(\lfloor M/2d\rfloor_0),
\end{equation}
where $\bmu(x)$ is the M\"{o}bius function and $d$ runs over all positive divisors of $L^*$, and $\lfloor x\rfloor_0$ denotes the greatest integer less than $x$.
 It is easy to see that $I_1(L,M)=0$ for $M=1,2$ and that 
\begin{equation*}
I_1(1,M)=\begin{cases}(M^2-1)/8~~&\text{if $M$ is odd,}\\
 M(M-2)/8 &\text{otherwise}. 
\end{cases}
\end{equation*}
Further if $L^*=2$, then 
\begin{equation*}
I_1(L,M)=\begin{cases}\left(\frac M4\right)^2~~&\text{if } M\equiv 0\mod 4,\\ 
\left(\frac{M-2}4\right)^2 &\text{if } M\equiv 2 \mod 4.
\end{cases}
\end{equation*}
For the remaining cases we have
\begin{prop}\label{prop_sum}  Let $L^*>2$. If $\ell$ is the number of prime divisors of $L$,then
\begin{equation*}
I_1(L,M)=\varphi(L^*)(M^2/L^*-(-1)^\ell\epsilon)/8,
\end{equation*}
where
\begin{equation*}
 \epsilon=\begin{cases} 1~~&\text{if } M\equiv 1\mod 2,\\
                                       2 &\text{if } M\equiv 2\mod 4, L^*\equiv 0\mod 2,\\
0 &\text{otherwise.}
\end{cases}
 \end{equation*}
\end{prop}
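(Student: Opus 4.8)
The plan is to evaluate the right-hand side of \eqref{sum1} for $k=1$. With $S_1(n)=n(n+1)/2$ it reads
\[
I_1(L,M)=\sum_{d\mid L^*}\bmu(d)\,d\,\frac{n_d(n_d+1)}{2},\qquad n_d=\lfloor M/2d\rfloor_0 .
\]
Throughout this appendix $L\mid M$, and since $d\mid L^*\mid L$ every divisor $d$ occurring here divides $M$; hence $M/(2d)$ is either an integer or a half-integer. Writing $M=2dn_d+r_d$ with $r_d\in\{d,2d\}$ — here $r_d=d$ when $M/d$ is odd and $r_d=2d$ when $M/d$ is even, the latter forced by the convention that $\lfloor x\rfloor_0$ is the greatest integer \emph{strictly} less than $x$ — and putting $v_d=r_d-d\in\{0,d\}$, one has $n_d=(M-r_d)/(2d)$ and, after completing the square,
\[
d\,\frac{n_d(n_d+1)}{2}=\frac{(M-r_d)(M-r_d+2d)}{8d}=\frac{(M-v_d)^2}{8d}-\frac d8 .
\]

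Substituting this into the sum and expanding $(M-v_d)^2=M^2-2Mv_d+v_d^2$, together with the identities $\sum_{d\mid L^*}\bmu(d)/d=\varphi(L^*)/L^*$ and $\sum_{d\mid L^*}\bmu(d)\,d=\prod_{p\mid L^*}(1-p)=(-1)^\ell\varphi(L^*)$ (valid since $L^*$ is squarefree with $\ell$ prime factors and $\varphi(L^*)=\prod_{p\mid L}(p-1)$), we obtain
\[
I_1(L,M)=\frac{\varphi(L^*)M^2}{8L^*}-\frac{(-1)^\ell\varphi(L^*)}{8}+E,
\]
where $E=-\dfrac M4\sum_{d\mid L^*}\dfrac{\bmu(d)v_d}{d}+\dfrac18\sum_{d\mid L^*}\dfrac{\bmu(d)v_d^2}{d}$. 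The whole problem is thus reduced to showing $E=\tfrac18(-1)^\ell\varphi(L^*)(1-\epsilon)$, which I would do by a short case analysis on the $2$-adic valuation of $M$, using that $L\mid M$ forces $v_d\in\{0,d\}$ with the value decided by the parity of $M/d$.

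If $M$ is odd, then $M/d$ is odd for every (necessarily odd) $d\mid L^*$, so all $v_d=0$, $E=0$, matching $\epsilon=1$. If $M$ is even and either $L^*$ is odd or $4\mid M$, then $M/d$ is even for every $d\mid L^*$, so all $v_d=d$; then $\sum\bmu(d)v_d/d=\sum_{d\mid L^*}\bmu(d)=0$ and $\sum\bmu(d)v_d^2/d=\sum_{d\mid L^*}\bmu(d)d=(-1)^\ell\varphi(L^*)$, giving $E=\tfrac18(-1)^\ell\varphi(L^*)$ and $\epsilon=0$. If $M\equiv2\bmod 4$ and $L^*$ is even, write $L^*=2L'$ with $L'$ odd and squarefree and split the divisors of $L^*$ as $d'$ and $2d'$ with $d'\mid L'$; then $v_{d'}=d'$ but $v_{2d'}=0$, so $\sum\bmu(d)v_d/d=\sum_{d'\mid L'}\bmu(d')=0$ while $\sum\bmu(d)v_d^2/d=\sum_{d'\mid L'}\bmu(d')d'=(-1)^{\ell-1}\varphi(L')=-(-1)^\ell\varphi(L^*)$ (since $L'$ has $\ell-1$ prime factors and $\varphi(L')=\varphi(2L')=\varphi(L^*)$), giving $E=-\tfrac18(-1)^\ell\varphi(L^*)$ and $\epsilon=2$. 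In each case the hypothesis $L^*>2$ is exactly what guarantees that the relevant index set ($\{d\mid L^*\}$, resp. $\{d'\mid L'\}$) is not a singleton, so that $\sum\bmu(d)=0$ applies. Collecting the three terms yields the stated formula.

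The argument is essentially all bookkeeping; the one genuine point, and the thing to get right, is the interplay of two facts — that $L\mid M$ collapses the remainders $r_d$ to just $\{d,2d\}$, and that $\lfloor\,\cdot\,\rfloor_0$ is the strict floor, which is what produces the isolated term $-(-1)^\ell\varphi(L^*)/8$ and hence the correction $\epsilon\in\{0,1,2\}$. After that, identifying which of the remainder sums survives in each parity regime is routine, and the auxiliary evaluations of $\sum\bmu(d)$, $\sum\bmu(d)d$ and $\sum\bmu(d)/d$ over a squarefree modulus are standard.
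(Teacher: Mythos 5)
Your proof is correct and follows essentially the same route as the paper: both evaluate the M\"obius-inverted sum \eqref{sum1} term by term using $\sum_{d\mid L^*}\bmu(d)=0$, $\sum_{d\mid L^*}\bmu(d)/d=\varphi(L^*)/L^*$ and $\sum_{d\mid L^*}\bmu(d)\,d=(-1)^\ell\varphi(L^*)$, with the case split governed by the parity of $M/d$. Your uniform bookkeeping via $v_d=r_d-d$ handles all three cases at once (the paper writes out only $M\equiv 2\bmod 4$ with $L^*$ even and declares the rest similar) and, incidentally, treats the strict floor correctly where the paper's displayed value $\lfloor M/2d\rfloor_0=M/2d$ should read $M/2d-1$ --- a slip that cancels against $\sum_{d}\bmu(d)=0$ and does not affect the final formula.
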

\begin{proof}
By \eqref{sum1}, we see that
\begin{equation*}
I_1(L,M)=\sum_{d|L^*}\bmu(d)d\left(\frac{\lfloor M/2d\rfloor_0\left(\lfloor M/2d\rfloor_0+1\right)}2\right).
\end{equation*}
Since the other cases are treated similarly, we give the proof only in the case that $M\equiv 2\mod 4$ and $L^*\equiv 0\mod 2$. By dividing the sum into two partial sums for odd divisors and for even divisors, 
\begin{equation*}
{\small 
 I_1(L,M)=\frac12\sum_{d|L^*/2}\bmu(d)d\left(\lfloor M/2d\rfloor_0\left(\lfloor M/2d\rfloor_0+1\right)-2\lfloor M/4d\rfloor_0\left(\lfloor M/4d\rfloor_0+1\right)\right)}.
\end{equation*}
 Since $\lfloor M/2d\rfloor_0=M/2d$ and $\lfloor M/4d\rfloor_0=(M/2d-1)/2$,
\begin{equation*}
I_1(L,M)=\sum_{d|L^*/2}\bmu(d)(M^2/(16d)+M/4+d/4).
\end{equation*}
Since $L^*/2>1$, we have $\sum_{d|L^*/2}\bmu(d)=0$ and
\begin{equation*}
\begin{split}
&\sum_{d|L^*/2}\bmu(d)/d=\prod_{p|L^*/2}(1-1/p)=2\varphi(L^*)/L^*,\\
&\sum_{d|L^*/2}\bmu(d)d=\prod_{p|L^*/2}(1-p)=-(-1)^\ell\varphi(L^*),
\end{split}
\end{equation*}
where $p$ runs over all prime divisors of $L^*/2$.
Therefore,
\begin{equation*}
I_1(L,M)=\varphi(L^*)(M^2/L^*-2(-1)^\ell)/8.
\end{equation*}
\end{proof}
By \eqref{sum1}, we have immediately
\begin{prop}\label{prop_sum0}
\begin{equation}
I_0(L,M)=\begin{cases}(M-2)/2~~&\text{if }L^*=1,M\equiv 0\mod 2,\\
(M-1)/2~~&\text{if }L^*=1,M\equiv 1\mod 2,\\
(M-2)/4~~&\text{if }L^*=2,M\equiv 2\mod 4,\\
M\varphi(L^*)/2L^* &\text{otherwise}.
\end{cases}
\end{equation}
\end{prop}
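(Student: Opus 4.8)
The plan is to recognize $I_0(L,M)$ for what it is: by definition it counts the integers $t$ with $0<t<M/2$ that are prime to $L$, and since $L$ and $L^*$ have the same prime divisors this is simply the number of $t\in(0,M/2)\cap\mathbb Z$ with $GCD(t,L^*)=1$. I would then dispose of the two small radicals by hand. If $L^*=1$ the count is just the number of positive integers below $M/2$, namely $(M-1)/2$ for odd $M$ and $(M-2)/2$ for even $M$. If $L^*=2$ then $2\mid L\mid M$, so $M$ is even, and we must count the odd integers in $(0,M/2)$; separating $M\equiv 2$ and $M\equiv 0\bmod 4$ gives $(M-2)/4$ and $M/4$ respectively, the latter being the value $M\varphi(2)/(2\cdot 2)$ recorded in the last line of the statement.

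For $L^*>2$ I would invoke \eqref{sum1} with $k=0$. Since $S_0(n)=n$, it reads $I_0(L,M)=\sum_{d\mid L^*}\bmu(d)\lfloor M/2d\rfloor_0$, and because $L^*\mid L\mid M$ every $d\mid L^*$ divides $M$, so $\lfloor M/2d\rfloor_0$ equals $M/2d-1$ when $M/d$ is even and $(M/d-1)/2$ when $M/d$ is odd. Feeding this in and using the identities $\sum_{d\mid L^*}\bmu(d)=0$ (valid since $L^*>1$) and $\sum_{d\mid L^*}\bmu(d)/d=\varphi(L^*)/L^*$ collapses the sum to $M\varphi(L^*)/(2L^*)$. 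When $M\equiv 2\bmod 4$ and $L^*$ is even the parity of $M/d$ is not constant over the divisors $d$, so there one splits the sum into its odd part and its even part exactly as in the proof of Proposition~\ref{prop_sum}; the two pieces recombine to the same value.

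A cleaner alternative, which I might present instead, uses the involution $t\mapsto M-t$. Because $L^*\mid M$, the number of integers in $\{1,\dots,M-1\}$ prime to $L^*$ is $M\varphi(L^*)/L^*$ (the endpoint $M$ itself is not prime to $L^*$ once $L^*\ge 2$); the map $t\mapsto M-t$ is a fixed-point-free involution on this set unless its only possible fixed point $M/2$ is itself an integer prime to $L^*$, and a short check shows that occurs precisely when $L^*=2$ and $M\equiv 2\bmod 4$. Hence for $L^*\ge 2$ outside that one case exactly half of the residues lie below $M/2$, giving $M\varphi(L^*)/(2L^*)$, while in the exceptional case one first removes the fixed point, giving $(M/2-1)/2=(M-2)/4$.

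I expect no real difficulty; the only place demanding care is the bookkeeping of $\lfloor M/2d\rfloor_0$ — equivalently, deciding exactly when $M/2$ is prime to $L^*$ — in the mixed-parity case $M\equiv 2\bmod 4$ with $L^*$ even, and that step is entirely parallel to the corresponding computation for $I_1$ already carried out in Proposition~\ref{prop_sum}.
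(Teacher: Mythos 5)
Your proposal is correct and its main line — applying \eqref{sum1} with $k=0$, using $S_0(n)=n$, $\sum_{d\mid L^*}\bmu(d)=0$ and $\sum_{d\mid L^*}\bmu(d)/d=\varphi(L^*)/L^*$, with the cases $L^*=1,2$ and the mixed-parity case $M\equiv 2\bmod 4$, $L^*$ even handled by hand — is exactly the computation the paper declares ``immediate'' from \eqref{sum1}, so you are filling in the same argument in detail. Your alternative via the involution $t\mapsto M-t$ on the residues prime to $L^*$ in $\{1,\dots,M-1\}$ is also correct and arguably cleaner, since it isolates the exceptional case $(M-2)/4$ as precisely the situation where the fixed point $M/2$ is itself prime to $L^*$.
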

We remark that $I_0(L,M)$ is the number of positive integers smaller than $M/2$ prime to $L$.
\begin{prop}\label{J1} Let $M\not\equiv 0\mod 3$ and $L^*>2$. Then
\begin{equation*}
J_1(L,M)=\varphi(L^*)(M^2/L^*+(-1)^\ell(8-9\epsilon))/24,
\end{equation*}
where 
\begin{equation*}
\epsilon=\begin{cases}1~~~&\text{if }M\equiv 1 \mod 2,\\
               2   &\text{if }M\equiv 2 \mod 4,L^*\equiv 0\mod 2,\\
               0   &\text{otherwise}.
\end{cases}
\end{equation*}
\end{prop}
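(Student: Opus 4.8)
The plan is to imitate the proof of Proposition~\ref{prop_sum}: remove the coprimality condition by M\"obius inversion, evaluate the resulting one–variable sums in closed form, and recombine using the elementary identities $\sum_{d\mid L^*}\bmu(d)=0$, $\sum_{d\mid L^*}\bmu(d)/d=\varphi(L^*)/L^*$ and $\sum_{d\mid L^*}\bmu(d)d=(-1)^{\ell}\varphi(L^*)$ (the last because $L^*$ is squarefree with $\ell$ prime factors). The first observation is that, since $L\mid M$ and $3\nmid M$, we have $3\nmid L^*$; hence every $d\mid L^*$ is prime to $3$ and satisfies $d^{-1}\equiv d\bmod 3$. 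Writing $u=dv$ in the definition \eqref{sum} of $J_1(L,M)$ gives
\[
J_1(L,M)=\sum_{d\mid L^*}\bmu(d)\,d\,T_d,\qquad T_d=\sum_{\substack{0<v<M/(2d)\\ v\equiv -Md\bmod 3}}v.
\]

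Next I would evaluate each $T_d$. For a real bound $B$ and a residue $r$ one has $\sum_{0<v\le B,\ v\equiv r\bmod 3}v=\tfrac16\bigl(w^2+3w+r(3-r)\bigr)$, where $w$ is the largest integer $\le B$ with $w\equiv r\bmod 3$; applied with $B=\lfloor M/(2d)\rfloor_0$ and $r\equiv -Md\bmod 3$, and using that $d\mid M$ forces $\lfloor M/(2d)\rfloor_0=M/(2d)+\gamma_d$ with $\gamma_d\in\{-1,-\tfrac12\}$ according to the parity of $M/d$, this expresses $T_d$ as $\tfrac{M^2}{24d^2}$ plus a term linear in $M/d$ plus a constant, the linear and constant coefficients depending only on the residue of $d$ modulo $6$ (that is, on $d\bmod 3$ together with the parity of $d$). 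Summing against $\bmu(d)d$, the leading term gives $\tfrac{M^2}{24}\sum_{d\mid L^*}\bmu(d)/d=\varphi(L^*)M^2/(24L^*)$ at once; the remaining terms I would handle by grouping the divisors of $L^*$ by their class modulo $6$ and splitting into the cases $M\equiv1\bmod 2$, $M\equiv2\bmod 4$ with $L^*$ even, $M\equiv2\bmod 4$ with $L^*$ odd, and $M\equiv0\bmod 4$, exactly as in Proposition~\ref{prop_sum}; the parameter $\epsilon$ is precisely the record of the floor corrections $\gamma_d$. An equivalent and slightly more conceptual route is to split the range of $u$ according to $u\bmod 3$ and write $J_1(L,M)=\tfrac12\bigl(A+\chi(-M)B\bigr)$, where $\chi$ is the nontrivial character modulo $3$, $A$ is the sum of the $u$ prime to $3L^*$ with $0<u<M/2$, and $B=\sum_{0<u<M/2,\ \gcd(u,L)=1}\chi(u)u$; here $A$ is evaluated exactly as in Proposition~\ref{prop_sum} with $3L^*$ in place of $L^*$ (and $3\nmid M$ removes some of the floor adjustments), while $B$ is a short sum handled by the period–$3$ identity $\sum_{0<v\le 3m}\chi(v)v=-m$ followed by M\"obius inversion.

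The main obstacle is the bookkeeping in the recombination step. A priori $T_d$ involves $\lfloor M/(2d)\rfloor_0\bmod 3$, which is not multiplicative in $d$, so it is not evident that the final answer is independent of $M\bmod 3$, as the statement asserts. What makes it work is that the quantity $\delta_d\equiv\bigl(\lfloor M/(2d)\rfloor_0+Md\bigr)\bmod 3$ controlling the linear and constant coefficients takes the same value on the two residue classes modulo $3$ that actually occur among the divisors of $L^*$ in each parity case — for instance, when $M$ is odd one checks $\delta_d\equiv1$ for every $d\mid L^*$, so that $T_d=\tfrac{M^2}{24d^2}-\tfrac1{24}$ and hence $J_1(L,M)=\varphi(L^*)\bigl(M^2/L^*-(-1)^{\ell}\bigr)/24$ — so the genuinely $d$-dependent pieces are annihilated by $\sum_{d\mid L^*}\bmu(d)=0$ and only multiples of $\sum\bmu(d)/d$ and $\sum\bmu(d)d$ survive, producing $\varphi(L^*)\bigl(M^2/L^*+(-1)^{\ell}(8-9\epsilon)\bigr)/24$. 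This verification is elementary but case-ridden; as in Proposition~\ref{prop_sum} I would carry out one representative case in detail and leave the remaining cases to the reader.
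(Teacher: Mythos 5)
Your plan is correct --- the M\"obius decomposition $J_1=\sum_{d\mid L^*}\bmu(d)\,d\,T_d$ is valid, your closed form for the truncated progression sums is right, and your representative case ($M$ odd, where $\delta_d\equiv(3m-1)/2\equiv 1\bmod 3$ with $m=M/d$ gives $T_d=(m^2-1)/24$ uniformly and hence $J_1=\varphi(L^*)(M^2/L^*-(-1)^\ell)/24$) agrees with the stated formula at $\epsilon=1$ --- but it is a genuinely different route from the paper's. The paper never M\"obius-inverts $J_1$ with the congruence condition attached. Instead it substitutes $u=3s''-M$, which converts $u\equiv -M\bmod 3$ into the range condition $M/3<s''<M/2$ and yields
\[
J_1(L,M)=3I_1(L,M)-MI_0(L,M)-\sum_{s}(3s-M),
\]
the last sum running over $0<s<M/3$ with $\gcd(s,L)=1$. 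Only that sum is M\"obius-inverted, and its summand collapses to the single expression $(3M-M^2/d-2d)/6$ for every $d\mid L^*$ irrespective of $M/d\bmod 3$; that one identity is what makes the answer independent of $M\bmod 3$, so the only case analysis is the parity analysis already packaged into $\epsilon$ through Proposition~\ref{prop_sum}. Your version keeps the congruence in the inner sum, so each $T_d$ depends on the largest admissible term $w$ and hence on $M/d\bmod 6$, and the independence of $M\bmod 3$ must be checked by hand via your $\delta_d$; that check does succeed (it is the same cancellation in disguise), but it multiplies the cases. The paper's substitution buys reuse of $I_1$ and $I_0$ and localizes all the mod-$3$ bookkeeping in one algebraic identity; yours buys self-containment at the cost of a residue check mod $6$ in each parity regime. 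One small caution on your alternative character route: the sum $A$ there is not literally an $I_1(3L^*,M)$ in the sense of \eqref{sum}, since that definition presupposes the first argument divides $M$ while $3\nmid M$; it still evaluates, but the divisors $d$ with $3\mid d$ need their floors handled separately.
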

\begin{proof}In \eqref{sum}, put $u=3s''-N$. Then the condition on $u$ is equivalent to the condition on $s''$ that $N/3<s''<N/2, GCD(s'',N)=1$. Therefore
\begin{equation}\label{sum2}
\begin{split}
J_1(L,M)=&\sum_{s'}(3s'-M)-\sum_s(3s-M)\\
&=3I_1(L,M)-MI_0(L,M)-\sum_s(3s-M),
\end{split}
\end{equation}
where $0<s'<M/2,GCD(s',L)=1$ and $0<s<M/3,GCD(s,L)=1$. For the sum for $s$,
\begin{equation*}
\sum_s(3s-M)=\sum_{d|L^*}\bmu(d)\lfloor M/3d\rfloor_0\left(3d\left(\lfloor M/3d\rfloor_0+1\right)/2-M\right).
\end{equation*}
Since $\lfloor M/3d\rfloor_0\left(3d\left(\lfloor M/3d\rfloor_0+1\right)/2-M\right)=(3M-M^2/d-2d)/6$ for any $d$, 
\begin{equation*}
\begin{split}
\sum_s(3s-M)&=\left(\sum_{d|L^*}\bmu(d)(3M-M^2/d-2d)\right)/6\\
&=-\varphi(L^*)(M^2/L^*+2(-1)^\ell)/6.
\end{split}
\end{equation*}
By Propositions \ref{prop_sum0},~\ref{prop_sum} and \eqref{sum2}, we have our assertion.
\end{proof}
\begin{rem} If $L^*=2$, we have
\begin{equation*}
J_1(L,M)=\begin{cases}(M^2-12M+20)/48~~&\text{if }M\equiv 4 \mod 4,\\
 (M^2-16)/48 &\text{if }M\equiv 0 \mod 4.
\end{cases}
\end{equation*}
\end{rem}
\begin{prop} Let $M\not\equiv 0\mod 3$ and $L^*>2$. Let $\ell$ be the number of prime factors of $L^*$. Then
\begin{equation*}
J_0(L,M)=\frac 16\left(M\varphi(L^*)/L^*-\left(\frac{M}3\right)2^{\ell}\epsilon\right),
\end{equation*}
where
\begin{equation*}
\epsilon=\begin{cases}1~~&\text{if $L^*$ has no prime factors congruent to $1\mod 3$},\\
                      0 &\text{otherwise},
\end{cases}
\end{equation*}
and $\left(\frac *3\right)$ is the Legendre symbol.
\end{prop}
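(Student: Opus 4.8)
The plan is to mimic the proof of Proposition~\ref{J1}. As there, I use the standing assumption $L\mid M$ (so $L^*\mid M$ and, since $3\nmid M$, also $3\nmid L$). First I would substitute $u=3s-M$: because $3\nmid M$ the congruence $u\equiv -M\bmod 3$ holds for every integer $s$, the condition $0<u<M/2$ becomes $M/3<s<M/2$, and $L\mid M$ with $3\nmid L$ gives $GCD(u,L)=GCD(3s-M,L)=GCD(3s,L)=GCD(s,L)$. Since $M/3\notin\mathbb Z$ this yields
\[
J_0(L,M)=\#\{s\in\mathbb Z:\ M/3<s<M/2,\ GCD(s,L)=1\}=I_0(L,M)-T,
\]
where $T=\#\{s\in\mathbb Z:\ 0<s<M/3,\ GCD(s,L)=1\}$.

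Next I would compute $T$ by the same M\"obius sieve that produces \eqref{sum1}, so that $T=\sum_{d\mid L^*}\bmu(d)\lfloor M/3d\rfloor_0$. Since $L^*\mid M$ and $3\nmid M/d$, one has $\lfloor M/3d\rfloor_0=(M/d-r_d)/3$, where $r_d\in\{1,2\}$ is the residue of $M/d$ modulo $3$. Separating the main term and using $\sum_{d\mid L^*}\bmu(d)/d=\varphi(L^*)/L^*$ gives
\[
T=\frac{M\varphi(L^*)}{3L^*}-\frac13\sum_{d\mid L^*}\bmu(d)\,r_d.
\]
Combined with $I_0(L,M)=M\varphi(L^*)/2L^*$ from Proposition~\ref{prop_sum0}, the whole problem comes down to evaluating the finite sum $\sum_{d\mid L^*}\bmu(d)r_d$.

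For that, I would note that $2r_d-3=-\left(\frac{M/d}{3}\right)=-\left(\frac M3\right)\left(\frac d3\right)$, a Legendre-symbol identity that is legitimate because $3\nmid d$. Since $L^*>1$ forces $\sum_{d\mid L^*}\bmu(d)=0$, and the remaining sum is multiplicative,
\[
\sum_{d\mid L^*}\bmu(d)\,r_d=-\frac12\left(\frac M3\right)\sum_{d\mid L^*}\bmu(d)\left(\frac d3\right)=-\frac12\left(\frac M3\right)\prod_{p\mid L^*}\left(1-\left(\frac p3\right)\right).
\]
A prime $p\equiv1\bmod 3$ kills the product, a prime $p\equiv2\bmod 3$ contributes a factor $2$, and $p=3$ cannot occur; hence the product equals $2^{\ell}\epsilon$ with $\ell$ and $\epsilon$ as in the statement. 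Putting the pieces together,
\[
J_0(L,M)=\frac{M\varphi(L^*)}{2L^*}-\frac{M\varphi(L^*)}{3L^*}-\frac16\left(\frac M3\right)2^{\ell}\epsilon=\frac16\left(\frac{M\varphi(L^*)}{L^*}-\left(\frac M3\right)2^{\ell}\epsilon\right).
\]

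I expect two points to carry the weight. The first is the coprimality identity $GCD(3s-M,L)=GCD(s,L)$: this is exactly where $L\mid M$ is used, and it is also what makes $\lfloor M/3d\rfloor_0$ reduce to $(M/d-r_d)/3$ with no leftover floor defect. The second, and the real crux, is recognizing that $\sum_{d\mid L^*}\bmu(d)r_d$ is governed by the Euler product $\prod_{p\mid L^*}(1-\left(\frac p3\right))$ of the nontrivial character modulo $3$; once one spots that $2r_d-3=-\left(\frac{Md}{3}\right)$, this product automatically delivers both the factor $2^{\ell}$ and the dichotomy defining $\epsilon$, while $\left(\frac M3\right)$ supplies the sign. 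The remaining floor bookkeeping — and the vanishing of the analogous correction term for $I_0(L,M)$, already recorded in Proposition~\ref{prop_sum0} — is the same routine case analysis as in Propositions~\ref{prop_sum} and \ref{J1}, so I would write out only a representative case.
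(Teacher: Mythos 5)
Your proof is correct and follows essentially the same route as the paper's: the substitution $u=3s-M$ giving $J_0=I_0(L,M)-\sum_{d\mid L^*}\bmu(d)\lfloor M/3d\rfloor_0$, followed by the M\"obius evaluation of the correction term. The only (cosmetic) difference is that you evaluate $\sum_{d\mid L^*}\bmu(d)r_d$ via the Euler product $\prod_{p\mid L^*}\bigl(1-\left(\frac p3\right)\bigr)$ of the quadratic character mod $3$, whereas the paper splits the divisors into residue classes mod $3$ and counts $\sum_{d'\equiv 1}\bmu(d')$ directly; both yield $2^{\ell}\epsilon$ up to the same factor.
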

\begin{proof} By the same argument in Proposition~\ref{J1},
\begin{equation*}
J_0(L,M)=I_0(L,M)-\sum_{d|L^*}\bmu(d)\lfloor M/3d\rfloor_0.
\end{equation*} 
Since $\lfloor M/3d\rfloor_0=(M-d)/3d$ (resp. $(M-2d)/3d$) if $d\equiv M \mod 3$ (resp. $d\equiv -M \mod 3$),
\begin{equation*}
\sum_{d|L^*}\bmu(d)\lfloor M/3d\rfloor_0=\frac 13\left(M\varphi(L^*)/L^*+\left(\frac{M}3\right)\sum_{d'}\bmu(d')\right),
\end{equation*}
where $d'$ runs over all divisors of $L^*$ congruent to $1\mod 3$. It is easy to see that
\begin{equation*}
\sum_{d'}\bmu(d')=2^{\ell_2-1}\sum_{d''|L_1*}\bmu(d''),
\end{equation*}
where $L_1^*$ is the product of all prime factors of $L^*$ congruent to $1\mod 3$ and $\ell_2$ be the number of prime factors of $L^*$ congruent to $-1\mod 3$.
Therefore by Proposition~\ref{prop_sum0}, we have our assertion.
\end{proof}
It is obvious that $J_1(L,M)=J_0(L,M)=0$ if $M,L\equiv 0\mod 3$. 

\vspace{5mm}
\end{document}